\newtheorem{theorem}{Theorem}[section]
\newtheorem{lemma}[theorem]{Lemma}
\newtheorem{proposition}[theorem]{Proposition}
\newtheorem*{theorem*}{\it Theorem}
\numberwithin{equation}{section}
\def\1{\raisebox{2pt}{\rm{$\chi$}}}
\def\R{\mathbb{R}}
\def\XXint#1#2#3{{\setbox0=\hbox{$#1{#2#3}{\int}$}
     \vcenter{\hbox{$#2#3$}}\kern-.5\wd0}}
\definecolor{violet(ryb)}{rgb}{0.53, 0.0, 0.69}
\begin{document}

\title[$\ell_1$ double-bubbles]{\bf The $\ell_1$ double-bubble problem in three dimensions}

\author[M. Friedrich]{Manuel Friedrich} 
\address[Manuel Friedrich]{Department of Mathematics, Friedrich-Alexander Universit\"at Erlangen-N\"urnberg. Cauerstr.~11,
    D-91058 Erlangen, Germany, \& Mathematics M\"{u}nster,  
University of M\"{u}nster, Einsteinstr.~62, D-48149 M\"{u}nster, Germany}
\email{manuel.friedrich@fau.de}

\author[W. G\'orny]{Wojciech G\'orny}
\address[Wojciech G\'orny]{Faculty of Mathematics, Informatics and
  Mechanics, University of Warsaw, Banacha 2, 02-097 Warsaw, Poland
  and Faculty of Mathematics, University of
  Vienna, Oskar-Morgenstern-Platz 1, A-1090 Vienna, Austria}
\email{wojciech.gorny@univie.ac.at}
\urladdr{\url{https://www.mat.univie.ac.at/~wgorny}}

\author[U. Stefanelli]{Ulisse Stefanelli}
\address[Ulisse Stefanelli]{Faculty of Mathematics, University of
  Vienna, Oskar-Morgenstern-Platz 1, A-1090 Vienna, Austria,
Vienna Research Platform on Accelerating
  Photoreaction Discovery, University of Vienna, W\"ahringerstra\ss e 17, 1090 Vienna, Austria,
 \& Istituto di
  Matematica Applicata e Tecnologie Informatiche {\it E. Magenes}, via
  Ferrata 1, I-27100 Pavia, Italy
}
\email{ulisse.stefanelli@univie.ac.at}
\urladdr{\url{http://www.mat.univie.ac.at/~stefanelli}}

\keywords{Double bubble, characterization of minimizers, slicing argument. \\
\indent 2020 {\it Mathematics Subject Classification:} 
49Q10. %  	Optimization of shapes other than minimal surfaces
}

\setcounter{tocdepth}{1}

%\date{\today}

\begin{abstract}
We characterize the unique minimizer of the three-dimensional
double-bubble problem with respect to the $\ell_1$-norm for volume ratios between $1/2$ and $2$.
\end{abstract}

\maketitle
\thispagestyle{empty}

%{ \renewcommand\contentsname{Contents }
%\setcounter{tocdepth}{3}
%\tableofcontents
% }

\section{Introduction}

The double-bubble problem consists in determining the optimal pair of sets of given volumes minimizing the total surface. In the classical Euclidean setting, optimal configurations are pairs of
regions enclosed by  three spherical caps, meeting at a $2\pi/3$ angle. This was first proved in the planar case in \cite{Foisy}, then extended in \cite{Hutchings} to three dimensions, and finally to all dimensions in \cite{Reichardt}. Besides the Euclidean case, double-bubble problems have been considered in a variety of different settings, including hyperbolic spaces \cite{Corneli2,Corneli,Cotton,Masters}, hyperbolic surfaces \cite{Boyer} and cones \cite{Lopez,Morgan}, the three-dimensional torus \cite{Carrion,Corneli0}, the Gau\ss\ space \cite{Corneli2,Milman}, and the anisotropic Grushin plane \cite{Franceschi}.

This note is concerned with the three-dimensional double-bubble
problem for the $\ell_1$-norm. Given $v \in \mathbb{R}^3$, we denote by $|v|_1$ its $\ell_1$-norm 
$$ |v|_1 = |v_1| + |v_2| + |v_3|$$
(we will later use the same notation for the $\ell_1$-norm of vectors in $\mathbb{R}^2$). For any $\mathcal{H}^2$-rectifiable subset $F \subset \R^3$, we denote the corresponding $\ell_1$-surface by 
$${\ell_1}(F) = \int_{F}   |\nu_F|_1 \, {\rm d}\mathcal{H}^2,  $$
where $\nu_F$ denotes the  measure-theoretical normal to $F$ and $\mathcal{H}^n$ stands for the
$n$-dimensional Hausdorff measure. We  consider sets of finite perimeter
$G  \subset \R^3$ \cite{Ambrosio-Fusco-Pallara} and use the
fact that their so-called {\it reduced boundary} $\partial^* G
$  is a $\mathcal{H}^2$-rectifiable set. To each configuration $(A,B)$ consisting of two three-dimensional sets of finite perimeter, we associate the energy
\begin{equation*}
E(A,B) := \ell_1(\partial^* A  )+\ell_1(\partial^* B  ) - \ell_1(\partial^* A \cap \partial^*B).
\end{equation*}
 This corresponds to the $\ell_1$-surface of the set $A\cup B$  together with the $\ell_1$-interface between the sets $A$ and $B$. In the following, we let  the volumes $V_A:=\mathcal{L}^3(A)$ and
$V_B:=\mathcal{L}^3(A)$ be fixed,  where
$\mathcal{L}^n$ denotes the $n$-dimensional Lebesgue measure.  Our main assumption is that the ratio 
  $ {V_B}/{V_A} $ belongs to $[1/2,2]$.

The {\it double-bubble problem} hence corresponds to
\begin{align}
\displaystyle \min \bigg\{ E(A,B) \, \colon \, \ A,\, B \subset \R^3 \ \text{of finite perimeter,} \ A \cap B= \emptyset, \ \mathcal{L}^3(A)=V_A, \ \mathcal{L}^3(B)=V_B \bigg\}.\label{eq:db_problem}
\end{align}
Our main result reads as follows. 

\begin{theorem}[Characterization of the minimizer]\label{thm:mainresult}
Letting $ {V_B}/{V_A} \in[1/2,2]$, the unique minimizer of the
double-bubble problem \eqref{eq:db_problem} are two cuboids sharing a
square face. 
% see Figure {\rm \ref{fig}}. 
Up to translation and
axis-preserving isometries, the minimizer can be specified as
\begin{align*}
  A &=   \Big[- \frac{V_A}{(2 (V_A+V_B) /3)^{2/3}},0 \Big] \times
      \Big[0,(2 (V_A+V_B) /3)^{1/3}\Big]^2  
      ,\\
  B&=  \Big[0, \frac{V_B}{(2 (V_A+V_B) /3)^{2/3}} \Big] \times  \Big[0, (2 (V_A+V_B) /3)^{1/3}\Big]^2 .
     \end{align*}
The minimal energy is given by
$$E(A,B) =  \bigg(3 \bigg(\frac{2}{3} \bigg)^{2/3} + 4 \bigg(\frac{3}{2} \bigg)^{1/3} \bigg)  (V_A+V_B)^{2/3}.$$
\end{theorem}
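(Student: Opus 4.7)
The plan is to combine a slicing argument in the three coordinate directions with a reduction to cuboidal configurations and a final finite-dimensional optimization. First, decompose $E(A,B) = S_1 + S_2 + S_3$, where
\[ S_i := \int_{\partial^*A}|\nu_A\cdot e_i|\,d\mathcal{H}^2 + \int_{\partial^*B}|\nu_B\cdot e_i|\,d\mathcal{H}^2 - \int_{\partial^*A\cap\partial^*B}|\nu_A\cdot e_i|\,d\mathcal{H}^2. \]
By the slicing formula for sets of finite perimeter, $S_i = \int_{e_i^\perp} e_i(y)\,d\mathcal{H}^2(y)$, where $e_i(y)$ denotes the number of phase transitions among $\emptyset$, $A$, $B$ along the line through $y$ parallel to $e_i$. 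An elementary 1D analysis gives $e_i(y) \geq 2\chi_{\pi_i(A\cup B)}(y) + \chi_{\pi_i(A)\cap\pi_i(B)}(y)$: a bounded non-empty slice has at least two boundary points, and a slice in which both phases appear forces at least three transitions. Integration and inclusion--exclusion then yield
\[ E(A,B) \;\geq\; \sum_{i=1}^3\bigl( P_i + P_i^A + P_i^B \bigr), \]
with $P_i^A := |\pi_i(A)|$, $P_i^B := |\pi_i(B)|$, $P_i := |\pi_i(A\cup B)|$.

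The next and principal step is a structural reduction showing that the optimum is attained, up to rearrangement, by two axis-aligned cuboids joined across a full common face. I would proceed by symmetrizing $(A,B)$ slice by slice along each coordinate axis (a Steiner-type rearrangement adapted to the $\ell_1$-setting that does not increase $E$ and preserves both $V_A$ and $V_B$), and by using that saturation of the slicing bound forces a single ``interface direction'' $e_{i_0}$ with $\pi_{i_0}(A) = \pi_{i_0}(B)$ modulo null sets while the projections in the other two directions are essentially disjoint. The hypothesis $V_B/V_A \in [1/2, 2]$ enters here to rule out the degenerate ``partial face'' configuration in which a much smaller $B$ is attached to only a portion of a face of $A$.

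Once the reduction is in place, writing $A = [-a,0]\times[0,h]\times[0,w]$ and $B=[0,b]\times[0,h]\times[0,w]$ with $ahw=V_A$ and $bhw=V_B$, the energy simplifies to
\[ E(A,B) \;=\; 3hw + \frac{2V}{h} + \frac{2V}{w}, \qquad V := V_A + V_B, \]
and AM--GM on the three positive terms gives $E(A,B) \geq 3\sqrt[3]{12\,V^{2}} = 3\cdot 12^{1/3}\,V^{2/3}$, with equality iff $3hw = 2V/h = 2V/w$, i.e.\ $h=w=(2V/3)^{1/3}$. A direct computation confirms $3\cdot 12^{1/3} = 3(2/3)^{2/3} + 4(3/2)^{1/3}$, matching both the minimum claimed in the theorem and the specified cuboid dimensions.

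The main obstacle is the structural reduction. A naive chaining of the slicing bound with Loomis--Whitney and AM--GM applied separately to $A$, $B$, and $A\cup B$ yields only $3\bigl(V_A^{2/3} + V_B^{2/3} + (V_A+V_B)^{2/3}\bigr)$, which lies strictly below the target $3\cdot 12^{1/3}(V_A+V_B)^{2/3}$ for generic volume ratios in $[1/2,2]$ (the largest gap occurring at $V_A = V_B$). Consequently, the proof cannot merely chain isoperimetric inequalities on the individual sets; it must exploit the coupling between $A$ and $B$ encoded in the equality cases of the slicing inequality, which is precisely what pins down a single interface direction. Uniqueness of the minimizer up to translation and axis-preserving isometries then follows by tracing equality back through both the reduction step and the final AM--GM.
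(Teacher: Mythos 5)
Your opening slicing bound $E(A,B)\geq\sum_{i=1}^3\bigl(P_i+P_i^A+P_i^B\bigr)$ is correct (it is the same estimate the paper uses as the ``horizontal'' part of its slicing lemma and in showing the projection overlap can be taken $\leq 1/3$), and your final AM--GM computation on the two-cuboid family is also correct and reproduces the stated constant. The genuine gap is the middle step. The ``structural reduction'' to two axis-aligned cuboids joined across a full common face is essentially the content of the theorem, and you do not prove it: you invoke a Steiner-type rearrangement of the pair $(A,B)$ that ``does not increase $E$ and preserves both $V_A$ and $V_B$,'' but no such rearrangement is constructed, and for two-phase (double-bubble) energies simultaneous symmetrization of both chambers is exactly the delicate point --- rearranging $A$ and $B$ separately can destroy disjointness or increase the interface term, which is why the classical double-bubble proofs could not proceed by naive symmetrization either. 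Likewise, saturation of your transition-count inequality only forces each coordinate line to meet $A\cup B$ in a single interval containing at most one $A$-to-$B$ switch; this is far from pinning down cuboids or a single interface direction, and it cannot be combined with the equality case of Loomis--Whitney because, as you yourself observe, the chained bound falls strictly below the target constant. So the entire analytic difficulty is concentrated in the step you leave as a plan, and the volume-ratio hypothesis $V_B/V_A\in[1/2,2]$ is never actually used in a verifiable way.

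For comparison, the paper closes this gap without any rearrangement: its slicing lemma retains, in addition to the projection terms, the integral $\int_{\R}E_{2D}(a(t),b(t))\,{\rm d}t$ of the \emph{exact} planar double-bubble energies of the horizontal slices, known from the 2D characterization (Proposition \ref{prop:chip}); the cuboid structure and the role of the volume ratio then emerge from a quantitative comparison of these slice energies with the projection areas (Proposition \ref{prop:estimateforenergy} and Lemma \ref{lem:appendix}). To salvage your outline you would need either to construct and justify the two-phase rearrangement in the $\ell_1$ setting, or to keep the 2D slice energies as the paper does.
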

\begin{figure}[h]
  \centering
  \pgfdeclareimage[width=85mm]{fig2}{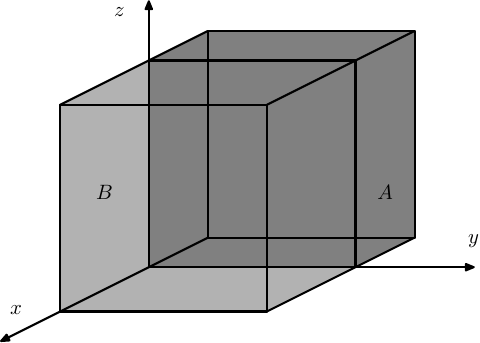}
  \pgfuseimage{fig2}
  \label{fig}
  \caption{The unique minimizer of the double-bubble
     problem \eqref{eq:db_problem}.}
\end{figure}

The minimality of the configuration in Theorem \ref{thm:mainresult} for this specific volume-ratio range has already been conjectured by {\sc Wecht, Barber, \& Tice} 
\cite{Wecht}.  In fact, by reducing the problem to cuboids, the optimality of $(A,B)$ from Theorem \ref{thm:mainresult} easily follows. Our aim here is to provide a proof of this conjecture,
starting from the most general setting of disjoint sets of finite perimeter.
Note that, for volume ratios $r$  smaller than $1/2$ or bigger  than  $2$, the
configuration in Figure \ref{fig} can be easily proved to be not optimal 
and the occurrence of different optimal configurations is conjectured \cite{Wecht}.

In the {\it planar case}, the characterization of optimal double-bubble configurations with respect to the $\ell_1$-norm in $\R^2$ is already well-known. The emergence of  three  different minimizers, depending on the volume ratio, has been discussed  by {\sc Morgan, French, \& Greenleaf} \cite{Morgan98}. A new proof of these results, based on different tools,
has been recently presented by {\sc Duncan, O'Dwyer, \& Procaccia} \cite{Duncan0}. The reach of the theory has been extended to the general setting of finite perimeter sets and to arbitrary interaction intensity in \cite{Double-Bubble-2D}. The continuous problem in $\R^2$  is  naturally connected with its discrete analogue on the ${\mathbb Z}^2$-lattice, which has also been studied \cite{Duncan,Double-Bubble}.  We further refer to \cite{Duncan3} for an analogous problem in the hexagonal norm. 

To our knowledge, our result is the first rigorous one % paper delivers a first rigorous result
for the $\ell_1$
double-bubble problem in three dimensions. 
 In fact,   our arguments  build  on the available 
understanding of the planar case by means of a {\it
  slicing} argument. We slice the minimizing configuration  with respect to a specific axis direction and we bound the 3D
energy $E$ in terms of an 
integral of the planar energies of the slices. Moving from the
knowledge of the exact value of the
2D minimal energy, see Proposition \ref{prop:chip}, this slicing approach
allows us to obtain an estimate of the minimal 3D energy $E$,
see Proposition \ref{prop:estimateforenergy}. This eventually turns
out to completely characterize optimal configurations.

The remainder of the paper is devoted to proving Theorem
\ref{thm:mainresult}. In particular, the proof of Theorem
\ref{thm:mainresult} is given in Section \ref{sec:mainproof}, based on
a few technical lemmas. These lemmas are then proved in
Section \ref{sec:proofsoflemmas}.

\section{Proof of the main result}\label{sec:mainproof}

As mentioned in the Introduction, the core step of the proof of
Theorem \ref{thm:mainresult} is that of
estimating from below the minimal value of the energy $E$ by taking
advantage of the characterization of minimizers in the planar case. We
hence start by recalling the 2D result in Subsection
\ref{sec:planar}. We then collect some notation and present a
crucial optimal bound  in Subsection
\ref{sec:notation}. After stating some technical lemmas in
Subsection \ref{sec:auxiliary}, the actual proof of Theorem
\ref{thm:mainresult} is given in Subsection~\ref{sec:proof:mainresult}. Eventually, the technical lemmas from
Subsection \ref{sec:auxiliary} are proved in Section \ref{sec:proofsoflemmas}.

\subsection{The planar case}\label{sec:planar}
Let us start by recalling the 2D result. Given a planar finite
perimeter set  $F_{2D}\subset {\mathbb R}^2$, we denote by $\partial^*_{2D}
F_{2D}\subset \R^2$ its {\it planar reduced boundary}, and by
$\nu=(\nu_1,\nu_2)$ the corresponding (measure-theoretic) {\it planar
  outer unit normal}. For all $\mathcal H^1$-rectifiable subsets $\varphi \subset \partial^*_{2D} F_{2D}$,  we
denote by
$$\ell_{1,2D}(\varphi) = \int_{\varphi}  (|\nu_1|  + |\nu_2|) \, {\rm d}\mathcal{H}^1$$
its length with respect to the $\ell_1$-norm in the plane.

We indicate the {\it minimal energy} of
a planar double bubble with regions of fixed areas $a,\, b  >0$ as
\begin{align}
E_{2D}(a,b) &:=    \min\Big\{\ell_{1,2D}(\partial^*_{2D}
  A_{2D}  )+\ell_{1,2D}(\partial^*_{2D} B_{2D} ) -
  \ell_{1,2D}(\partial^*_{2D} A_{2D} \cap \partial^*_{2D} B_{2D})\ \colon \nonumber \\
 & \qquad \qquad \qquad \text{$A_{2D},\, B_{2D}\subset \R^2$  of finite
   perimeter with} \nonumber\\
  &\qquad \qquad \qquad  A_{2D} \cap B_{2D}= \emptyset,  \
   \mathcal{L}^2(A_{2D})=a, \ \mathcal{L}^2(B_{2D})=b   \Big\}. \label{eq:2D_energy}
\end{align}
Define now the value
\begin{equation}r_*=\left(\frac{4(\sqrt{2}-1)}{1+2\sqrt{2}}\right)^2 \sim
0.1872957155.\label{rstar}
\end{equation}
 The main result in the planar case is the following \cite{Duncan,Double-Bubble-2D}.  

\begin{proposition}[Characterization of the planar minimizer]\label{thm:mainresult_planar}
Up to translations and axis-preserving isometries, the configurations $(A_{2D},B_{2D})$ realizing the minimum in \eqref{eq:2D_energy} are given by  
\begin{itemize}
\item Case $a/b\in[1/2,1]$%\quad Consider
\begin{equation*}
A_{2D}=[-a/c,0]\times [0,c], \quad B_{2D}=[0,b/c] \times[0,c] \quad \text{with} \ \ c= \sqrt{\frac{2(a+b)}{3}},
\end{equation*}
    and corresponding energy $E_{2D}(a,b) = 2\sqrt{6}\sqrt{a+b}$;
 \item  Case $a/b\in[r_*,1/2]$ %\quad 
    \begin{equation*}
      A_{2D}= [-a/c,0]\times[0,c] +(0,\lambda) , \quad B_{2D}= [0,\sqrt{b}]^2 \quad
      \text{with} \ \ c= \sqrt{2a}
    \end{equation*}
     for some $\lambda \in[0, \sqrt{b}-c]$, 
 and corresponding energy $E_{2D}(a,b) = 2\sqrt{2a} +  4\sqrt{b} $;
 \item Case $a/b\in(0,r_*]$%\quad Consider
    \begin{equation*}
      A_{2D}= [0,\sqrt{a}]^2 , \quad B_{2D}= [0,\sqrt{a+b}]^2\setminus A_{2D},
    \end{equation*}
     with corresponding energy $E_{2D}(a,b) =  4\sqrt{a+b} + 2 \sqrt{a}$.
   \end{itemize}
\end{proposition}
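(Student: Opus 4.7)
The plan is a three-step argument. First, one-dimensional slicing of $E$ in each coordinate direction yields a sharp lower bound in terms of the projections of $A_{2D}$ and $B_{2D}$. Second, the equality cases of that bound restrict candidate minimizers to a short list of axis-aligned rectangular configurations. Third, a final comparison of their energies produces the three regimes in the statement.

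\emph{Master lower bound via slicing.} For $t \in \R$, set $A_t := A_{2D} \cap (\{t\} \times \R)$, and analogously $B_t$, $A^s$, $B^s$. Splitting $\ell_{1,2D}(\partial^* F) = \int_{\partial^* F} |\nu_1| \, d\mathcal H^1 + \int_{\partial^* F} |\nu_2| \, d\mathcal H^1$ and applying the BV slicing identity $\int_{\partial^* F} |\nu_2| \, d\mathcal H^1 = \int_\R \mathcal H^0(\partial^* F_t) \, dt$, one obtains
\begin{equation*}
E(A_{2D}, B_{2D}) = \int_\R E_{1D}(A_t, B_t) \, dt + \int_\R E_{1D}(A^s, B^s) \, ds,
\end{equation*}
where $E_{1D}(F, G) := \mathcal H^0(\partial^* F) + \mathcal H^0(\partial^* G) - \mathcal H^0(\partial^* F \cap \partial^* G)$ is the natural $\ell_1$-bubble energy in one dimension. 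Any non-empty finite-perimeter subset of $\R$ has $\mathcal H^0(\partial^*) \geq 2$ and two disjoint such sets can share at most one boundary point; hence $E_{1D}(A_t, B_t) \geq 3$ whenever both slices have positive measure, and $\geq 2$ if exactly one does. Writing $p_i := |\pi_i A_{2D}|$, $q_i := |\pi_i B_{2D}|$, and $r_i := |\pi_i A_{2D} \cap \pi_i B_{2D}|$, integration yields the master inequality
\begin{equation*}
E(A_{2D}, B_{2D}) \geq (2 p_1 + 2 q_1 - r_1) + (2 p_2 + 2 q_2 - r_2).
\end{equation*}

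\emph{Rigidity and classification.} Equality in the master inequality forces (up to negligible sets) each slice of $A_{2D}$ and of $B_{2D}$, in both coordinate directions, to be a single interval and, whenever both slices are non-empty, to share exactly one endpoint; in particular any minimizer is orthogonally convex. Coupling this with the bounding-box constraints $p_1 p_2 \geq a$, $q_1 q_2 \geq b$, $r_i \leq \min(p_i, q_i)$, and the packing inequality $(p_1 + q_1 - r_1)(p_2 + q_2 - r_2) \geq a + b$, one checks that up to axis-preserving isometry only three configurations saturate the bound: (i) two rectangles of common height $c$ sharing a full vertical edge; (ii) a rectangle attached to one full side of a square $B_{2D}$ of side $\sqrt{b}$ (free to slide along that side); (iii) a square $A_{2D}$ of side $\sqrt{a}$ placed in a corner of a larger square of side $\sqrt{a+b}$, with $B_{2D}$ the complementary L-shape.

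\emph{Optimization and main obstacle.} A one-variable minimization in $c$ gives $E_1 = 2\sqrt{6}\sqrt{a+b}$ in case (i); direct evaluation combined with AM--GM gives $E_2 = 2\sqrt{2a} + 4\sqrt{b}$ in case (ii), which is geometrically admissible iff $\sqrt{2a} \leq \sqrt{b}$, i.e.\ $a/b \leq 1/2$; and $E_3 = 2\sqrt{a} + 4\sqrt{a+b}$ in case (iii). Comparing $E_1, E_2, E_3$ as functions of $a/b \in (0,1]$ produces exactly the thresholds $a/b = 1/2$ (where $E_1 = E_2$) and $a/b = r_*$ (where $E_2 = E_3$), the latter pinning down \eqref{rstar}. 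The main obstacle is the rigidity/classification step: translating rigorously, at the level of arbitrary finite-perimeter sets, the passage from equality in the master inequality to the finite list of candidate configurations requires not only the one-point slice-contact condition but also the bounding-box packing inequality, the latter being what excludes orthogonally convex candidates (e.g.\ T-shaped or staircase placements of $A_{2D}$ inside a notch of $B_{2D}$) that a priori satisfy the slice conditions but whose bounding boxes waste area.
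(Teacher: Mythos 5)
The paper does not actually prove this proposition: it is imported from the planar literature (\cite{Duncan0,Double-Bubble-2D}), so your attempt is a genuinely independent argument rather than a reconstruction. Your slicing identity, the bound $E_{1D}\ge 3$ (resp.\ $\ge 2$) on slices, the resulting master inequality $E\ge\sum_{i=1,2}(2p_i+2q_i-r_i)$, and the final energy comparison of the three candidate families are all correct, and the overall strategy (drop one dimension by slicing) is the same one the paper uses to pass from 3D to 2D.

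However, there is a genuine gap exactly where you locate ``the main obstacle,'' and it is worse than you suggest: the finite-dimensional relaxation over the projection data $(p_i,q_i,r_i)$ subject to the constraints you list ($p_1p_2\ge a$, $q_1q_2\ge b$, $r_i\le\min(p_i,q_i)$, and the packing inequality) is \emph{not} tight, so ``one checks that only three configurations saturate the bound'' cannot be carried out from those constraints alone. Take $a=b$ and the feasible point $p_1=p_2=q_1=q_2=\sqrt a$, $r_1=r_2=(2-\sqrt2)\sqrt a$: all your constraints hold (in particular $(p_1+q_1-r_1)(p_2+q_2-r_2)=2a=a+b$), yet the master lower bound evaluates to $(4+2\sqrt2)\sqrt a\approx 6.83\sqrt a$, strictly below the claimed minimum $2\sqrt6\sqrt{2a}=4\sqrt3\sqrt a\approx 6.93\sqrt a$. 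This projection data corresponds to two unit squares whose projections overlap in \emph{both} coordinate directions, which is incompatible with $A_{2D}\cap B_{2D}=\emptyset$ once each set fills its bounding box --- but nothing in your constraint list encodes that incompatibility. So either you must add further inequalities linking disjointness to simultaneous projection overlap (and then redo the optimization), or you must argue directly on the geometry of equality configurations rather than on the projection numbers. Separately, the phrase ``only three configurations saturate the bound'' conflates saturating the master inequality (which many non-minimizers do, e.g.\ two congruent rectangles sharing only part of an edge) with attaining the global minimum; the logic needs the chain $E\ge L(p,q,r)\ge E_{\rm min}(a,b)$ with equality analysis at both steps, and it is the second inequality that your argument does not yet establish.
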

An illustration of the three types of minimizers of the planar double-bubble problem  is  given in Figure \ref{types}.

\begin{figure}[h]
  \centering
  \pgfdeclareimage[width=120mm]{types}{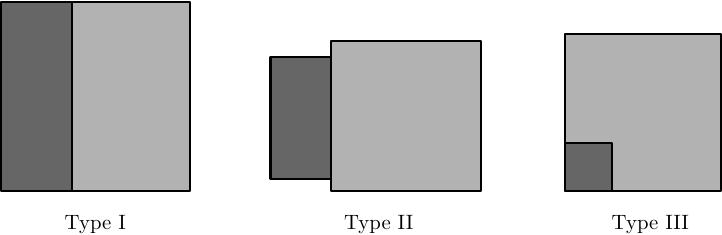}
  \pgfuseimage{types}
  \caption{Minimizers for the planar double-bubble
     problem \eqref{eq:2D_energy}.}   \label{types}
\end{figure}

\subsection{Notation and optimal lower bound}%proof
% strategy}
\label{sec:notation}
 
We start by introducing some notation used throughout the
rest of the paper.

At first, let us consider the specific geometry of Figure \ref{fig}.  It
consists of the union of two cuboids sharing a square face. Indicating by $M$ the area of such a shared face, 
the cuboids have a square cross section of sidelength
$\sqrt{M}$ and have heigth $V_A/M$ and $V_B/M$, respectively. Within this
specific class of configurations, one can identify the
minimal value of the energy  by simply computing
\begin{align}\label{eq: Emin}
E_{\rm min} := \min_{M>0}   \Big(  3M   +   \frac{4(V_A +V_B)}{\sqrt{M}}   \Big). 
\end{align} 
Indeed, such configuration features seven  faces: three squares with area $M$ and four rectangular faces with sidelengths $\sqrt{M}$ and $(V_A+V_B)/M$.

Let now $(A,B)$ be a pair of disjoint sets of finite perimeter in
$\R^3$, not necessarily being cuboids,  satisfying  
 $${
r:=\frac{V_B}{V_A} \in [1/2,2].} $$ 
Choose a  plane spanned by two
coordinate directions, and denote the area of the  orthogonal
projection of $A\cup B$   onto the plane by $m$, as well as the area
of the projection of the two sets  $A$ and $B$ by $m_A$ and $m_B$, respectively. We further set 
\begin{equation}\label{eq:definitionofp}
p := \frac{m_A + m_B}{m} - 1.
\end{equation}
The value $p$ describes the size of the overlap of the
projections of $A$ and $B$ onto the chosen plane. Up to
redefining the axes, in the following we assume that such
plane is
given by $\R^2 \times \lbrace 0 \rbrace$, so that the projection
occurs in the $z$ direction. 

We consider horizontal slices $\R^2 \times \lbrace t \rbrace$ and set 
$$a(t) = \mathcal{L}^2\big(A \cap (\R^2 \times \lbrace t \rbrace)    \big), \quad b(t) = \mathcal{L}^2\big(B \cap (\R^2 \times \lbrace t \rbrace)    \big). $$
Fubini's Theorem ensures that
\begin{align}\label{volumina}
V_A = \int_\R a(t) \, {\rm d}t, \quad \quad V_B =  \int_\R b(t) \, {\rm d}t. 
\end{align}
Let $\mathcal{T}_0 = \lbrace t\colon r a(t) = b(t)>0\rbrace$, $\mathcal{T}_A = \lbrace t\colon r a(t) > b(t) \rbrace$ and $\mathcal{T}_B = \lbrace t\colon r a(t) < b(t) \rbrace$. For convenience, we define
$$U_A = \int_{\mathcal{T}_A} (a(t) + b(t)) \, {\rm d}t, \quad U_B = \int_{\mathcal{T}_B} (a(t) + b(t)) \, {\rm d}t, \quad U_0 = \int_{\mathcal{T}_0} (a(t) + b(t)) \, {\rm d}t. $$
Clearly, $V_A + V_B = U_A + U_B + U_0$.  For $t \in \mathcal{T}_A$, we
set $\alpha(t) = b(t)/a(t) \in [0, r)$ and for $t \in \mathcal{T}_B$,
we set $\beta(t) = a(t)/b(t)\in [0,1/r)$. Since $r\mathcal{L}^3(A) = rV_A = V_B =  \mathcal{L}^3(B)$, we have by the definition of $\mathcal{T}_0$
$$r \int_{\mathcal{T}_A\cup \mathcal{T}_B} a(t) \,   {\rm d}t =   \int_{\mathcal{T}_A\cup \mathcal{T}_B} b(t) \,   {\rm d}t $$
and therefore
\begin{align}\label{eq: V*} 
\int_{\mathcal{T}_A} a(t) (r-\alpha(t)) \, {\rm d}t =  \int_{\mathcal{T}_A} (r a(t) - b(t))  \,  {\rm d}t    =  \int_{\mathcal{T}_B} (b(t) - r a(t)) \, {\rm d}t  =   \int_{\mathcal{T}_B} b(t) (1-r \beta(t)) \, {\rm d}t.
\end{align}
These definitions allow us to restate the result in the planar
case from Proposition \ref{thm:mainresult_planar} as follows.

% we can state the result concerning the minimal energy for optimal double-bubble configurations with respect to the $\ell_1$ norm in two dimensions, depending on the ratio of the two volume, and adapted to the notation used in this paper. It was shown in \cite[Theorem 1]{Duncan0} (see also \cite{Double-Bubble-2D}). Denote by $E_{2D}(a(t),b(t))$ the minimal energy of the two-dimensional double-bubble problem for phases with area $a(t)$ and $b(t)$ respectively.

\begin{proposition}[Minimal planar energy]\label{prop:chip}
Recall the definition  of $r_*$ in formula \eqref{rstar}  and define the function $f \colon [0,\infty) \to \R$ by 
$$f(x) = \Big( 4  + 2\sqrt{\frac{x}{x+1}} \Big) \chi_{[0,r_*]} +
\frac{4 + 2\sqrt{2x}}{\sqrt{x+1}} \chi_{(r_*, 1/2]} + 2\sqrt{6}
\chi_{(1/2,1]} $$
for $x \in [0,1]$ and by $f(x) = f(1/x)$ for $x >1$. Then, for $t \in \mathcal{T}_A$ we have 
$$E_{2D}(a(t),b(t)) = \sqrt{a(t) + b(t)} f(\alpha(t)),$$
for $t\in \mathcal{T}_B$ we have
$$E_{2D}(a(t),b(t)) = \sqrt{a(t) + b(t)} f(\beta(t)), $$
and for $t\in \mathcal{T}_0$ we have
$$ E_{2D}(a(t),b(t)) = 2\sqrt{6} \sqrt{a(t) + b(t)}. $$
\end{proposition}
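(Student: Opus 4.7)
The plan is to deduce the statement as a direct reformulation of Proposition \ref{thm:mainresult_planar}, using the symmetry $E_{2D}(a,b) = E_{2D}(b,a)$ implied by \eqref{eq:2D_energy} (just exchange the roles of the two regions), together with a routine algebraic manipulation that absorbs the factor $\sqrt{a(t)+b(t)}$ and matches the three branches of $f$ to the three cases of Proposition \ref{thm:mainresult_planar}.

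For $t \in \mathcal{T}_0$ the defining relation $r a(t) = b(t) > 0$ forces $a(t)/b(t) = 1/r \in [1/2, 2]$. Hence, up to swapping $a$ and $b$, the pair $(a(t), b(t))$ falls into the first regime of Proposition \ref{thm:mainresult_planar}, yielding $E_{2D}(a(t), b(t)) = 2\sqrt{6}\sqrt{a(t)+b(t)}$, as desired.

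For $t \in \mathcal{T}_A$ one has $a(t) > 0$ and $\alpha(t) = b(t)/a(t) \in [0, r) \subseteq [0, 2)$. If $\alpha(t) \in (1, r)$, then $1/\alpha(t) \in (1/r, 1] \subseteq (1/2, 1]$, and the first case of Proposition \ref{thm:mainresult_planar} applied to $(b(t), a(t))$ gives $E_{2D}(a(t), b(t)) = 2\sqrt{6}\sqrt{a(t)+b(t)} = \sqrt{a(t)+b(t)}\, f(\alpha(t))$, the last equality being the definition $f(\alpha(t)) = f(1/\alpha(t)) = 2\sqrt{6}$. If instead $\alpha(t) \in [0,1]$, I would apply Proposition \ref{thm:mainresult_planar} to $(b(t), a(t))$, whose area ratio is precisely $\alpha(t)$, and split according to the three subranges $[0, r_*]$, $(r_*, 1/2]$, $(1/2, 1]$. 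The three resulting energies---namely $4\sqrt{a(t)+b(t)} + 2\sqrt{b(t)}$, $2\sqrt{2b(t)} + 4\sqrt{a(t)}$, and $2\sqrt{6}\sqrt{a(t)+b(t)}$---rearrange into $\sqrt{a(t)+b(t)}\, f(\alpha(t))$ by means of the elementary identities
\begin{equation*}
\frac{\sqrt{a(t)+b(t)}}{\sqrt{1+\alpha(t)}} = \sqrt{a(t)}, \qquad \sqrt{a(t)+b(t)}\cdot\sqrt{\frac{\alpha(t)}{1+\alpha(t)}} = \sqrt{b(t)},
\end{equation*}
which follow directly from $\alpha(t) = b(t)/a(t)$. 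The set $\mathcal{T}_B$ is handled verbatim after exchanging the roles of $a$ and $b$ and replacing $\alpha$ by $\beta$. Since the whole argument is a verification, no real obstacle arises; the only point requiring care is the bookkeeping of which case of Proposition \ref{thm:mainresult_planar} is being invoked after the symmetry-based relabeling of the regions.
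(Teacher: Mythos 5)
Your proposal is correct and coincides with the paper's own treatment: the paper gives no separate proof of Proposition \ref{prop:chip}, presenting it as a direct restatement of Proposition \ref{thm:mainresult_planar}, and your verification (using the symmetry $E_{2D}(a,b)=E_{2D}(b,a)$, the case split on $\alpha(t)$, and the identities $\sqrt{a+b}/\sqrt{1+\alpha}=\sqrt{a}$ and $\sqrt{a+b}\sqrt{\alpha/(1+\alpha)}=\sqrt{b}$) is exactly the intended computation. The bookkeeping of which case of Proposition \ref{thm:mainresult_planar} applies after swapping the regions is handled correctly in all three branches.
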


%The results in \cite{Duncan0,Double-Bubble-2D} also include a
% characterisation of optimal configurations.
Along the  proof of Theorem \ref{thm:mainresult}, we make use of the explicit values of the minimal energy in the planar case in order to estimate $E$ by considering %We will use
                                %this explicit formula in our
                                %estimates in the three-dimensional
                                %case, applying it for
horizontal slices   with respect to a well-chosen coordinate direction. %, thus justifying the notation $a(t)$ and $b(t)$ for the area of the two phases. 
We assume that the parameter $p$ given by
\eqref{eq:definitionofp} describing the overlap between
projections of the two sets $A$ and $B$ in the $z$
direction satisfies $p \leq 1/3$. In fact, Lemma~\ref{lem:psmaller13}
shows that this is not restrictive, up to possibly relabeling the
axes. % pick a coordinate direction such that the parameter $p$ given by   \eqref{eq:definitionofp}, which describes the overlap between projections of the two sets $A$ and $B$ in a given coordinate direction, satisfies $p \leq 1/3$. The possibility  of choosing such a direction is ensured by Lemma \ref{lem:psmaller13} below
 The core of the proof of Theorem
\ref{thm:mainresult} consists in the following claim.

\begin{proposition}[Optimal lower bound]\label{prop:estimateforenergy}
Let $(A,B)$ be any configuration of disjoint sets with finite
perimeter, with $r\mathcal{L}^3(A)=rV_A =V_B=\mathcal{L}^3(B) $ for
$r \in  [1/2,2]$. Suppose that $p\le 1/3$. Then,
\begin{align}\label{eq:mainestimate}
 E(A,B) \geq (2+p)m +   \frac{4\sqrt{6}}{\sqrt{4+2p}\sqrt{m}} (U_A + U_B) +   \frac{2\sqrt{6}}{\sqrt{m}} U_0.
\end{align}
Moreover, the equality in \eqref{eq:mainestimate} is attained if and only if $\mathcal{L}^1(\mathcal{T}_A) = \mathcal{L}^1(\mathcal{T}_B) = 0$ and $a(t) + b(t) = m$ for  $t \in \mathcal{T}_0\setminus \mathcal{N}$, where $\mathcal{N}$ is a set of negligible $\mathcal{L}^1$-measure. 
\end{proposition}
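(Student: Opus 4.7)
My plan is to slice the pair $(A,B)$ along the $z$-axis (chosen via Lemma~\ref{lem:psmaller13} so that $p\leq 1/3$) and decompose the energy as $E(A,B)=E_{xy}(A,B)+E_z(A,B)$, where $E_{xy}$ collects all contributions weighted by $|\nu_1|+|\nu_2|$ and $E_z$ those weighted by $|\nu_3|$. For $E_z$, a one-dimensional slicing along vertical lines gives $E_z(A,B)=\int_{\R^2}(k_A+k_B+k_I)(x,y)\,dx\,dy$, where $k_A$, $k_B$, $k_I$ count the pure-$A$, pure-$B$ and interface crossings of the vertical line through $(x,y)$. Since each line over the $z$-projection of $A\cup B$ (area $m$) must have at least $2$ transitions, with at least one more whenever $(x,y)$ lies in the intersection of the $z$-projections of $A$ and $B$ (area $pm$), integration yields $E_z(A,B)\geq(2+p)m$, matching the first term of \eqref{eq:mainestimate}.

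For $E_{xy}$, a Fubini slicing argument yields $E_{xy}(A,B)=\int_{\R}E_{2D}^{\mathrm{slice}}(A_t,B_t)\,dt\geq\int_{\R}E_{2D}(a(t),b(t))\,dt$, and Proposition~\ref{prop:chip} rewrites the integrand as $\sqrt{a+b}\,f(\alpha)$ on $\mathcal{T}_A$, $\sqrt{a+b}\,f(\beta)$ on $\mathcal{T}_B$, and $2\sqrt{6}\,\sqrt{a+b}$ on $\mathcal{T}_0$. Since the slice of $A\cup B$ at height $t$ lies inside its $z$-projection, one has $a(t)+b(t)\leq m$, hence $\sqrt{a+b}\geq(a+b)/\sqrt{m}$; on $\mathcal{T}_0$ this pointwise bound immediately delivers $\int_{\mathcal{T}_0}2\sqrt{6}\sqrt{a+b}\,dt\geq(2\sqrt{6}/\sqrt{m})\,U_0$, which is the last term of \eqref{eq:mainestimate}.

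The main obstacle is the estimate on $\mathcal{T}_A\cup\mathcal{T}_B$. A pointwise slice-by-slice bound is not enough: $f$ can take the minimum value $f(0)=4$, while the required coefficient $4\sqrt{6}/\sqrt{4+2p}$ strictly exceeds $4$ as soon as $p<1$. The control must therefore be global, mixing $\mathcal{T}_A$- and $\mathcal{T}_B$-contributions (slices with small $f$ on one side are paired, via the volume-conservation identity \eqref{eq: V*}, with slices of proportionally small area on the other side, where the 2D perimeter cost per unit area is correspondingly larger). My plan is to combine the refined pointwise inequality $f(s)^2(s+1)\geq 16$ (equivalently $E_{2D}(a,b)\geq 4\sqrt{\max(a,b)}$, with equality only at $s=0$) with the projection bounds $a\leq m_A$, $b\leq m_B$, $m_A+m_B=(1+p)m$, the global bound $a+b\leq m$ and \eqref{eq: V*}, and then to close the estimate through a Cauchy--Schwarz-type manipulation in which the assumption $p\leq 1/3$ enters critically.

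Finally, the equality case follows by retracing each inequality: equality in the $E_z$-bound forces exactly the minimal number of transitions on each vertical line; equality in $\sqrt{a+b}\geq(a+b)/\sqrt{m}$ on $\mathcal{T}_0$ forces $a+b=m$; and equality in the $\mathcal{T}_A\cup\mathcal{T}_B$-estimate, together with the strict inequality $f(s)^2(s+1)>16$ for $s>0$, forces $\mathcal{L}^1(\mathcal{T}_A)=\mathcal{L}^1(\mathcal{T}_B)=0$, which gives the stated characterization of equality.
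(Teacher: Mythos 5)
Your skeleton matches the paper's: the slicing decomposition into a vertical part (giving $(2+p)m$ via the crossing count, exactly as in Lemma~\ref{lem:symmetrization-new} and the first display of the paper's Step~1) and a horizontal part bounded below by $\int E_{2D}(a(t),b(t))\,{\rm d}t$, with $\mathcal{T}_0$ handled by the pointwise bound $\sqrt{a+b}\ge (a+b)/\sqrt{m}$. You also correctly diagnose the central difficulty, namely that on $\mathcal{T}_A\cup\mathcal{T}_B$ no slice-by-slice bound can reach the coefficient $4\sqrt{6}/\sqrt{4+2p}>4=f(0)$, and that the deficit on one family of slices must be paid for by the other family through the volume-conservation identity \eqref{eq: V*}.

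However, that global estimate is precisely the heart of the proposition, and you do not actually carry it out: you only announce a ``Cauchy--Schwarz-type manipulation'' combining $f(s)^2(s+1)\ge 16$ with the projection bounds. This is a genuine gap, for two reasons. First, the inequality is quantitatively tight: the paper must split into the cases $\max\{m_A,m_B\}<\tfrac{2+p}{3}m$ (where a pointwise bound does suffice) and $\max\{m_A,m_B\}\ge\tfrac{2+p}{3}m$ (where it does not), introduce the functions $g_A^r,g_B^r$ of Lemma~\ref{lem:appendix}, reduce to $r\in\{1/2,1\}$, analyze four auxiliary functions piecewise over $[0,r_*]$, $[r_*,1/2]$, $[1/2,2]$, and finally verify
\begin{equation*}
\frac{4(1+2^{2/3})^{3/2}}{\sqrt{1+p}}\ \ge\ \frac{12\sqrt 6}{\sqrt{4+2p}}
\qquad\text{for } p\in[0,1/3],
\end{equation*}
which holds with little margin at $p=1/3$; none of this is replaceable by a one-line convexity argument. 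Second, your chosen ingredient $f(s)^2(s+1)\ge16$, i.e.\ $f(s)\ge 4/\sqrt{1+s}$ (equivalently $E_{2D}(a,b)\ge 4\sqrt{\max(a,b)}$), is strictly weaker than the inequality the argument actually requires, namely $f(s)\ge 4\sqrt{1+s}$ (equivalently $E_{2D}(a,b)\ge 4(a+b)/\sqrt{\max(a,b)}$, the statement that $f(x)/\sqrt{1+x}$ attains its minimum value $4$ at $x=0$ and $x=1/2$); with your version the extra factor $1/(1+s)$ ruins even the favorable pointwise case. Since the equality characterization ($\mathcal{L}^1(\mathcal{T}_A)=\mathcal{L}^1(\mathcal{T}_B)=0$) rests entirely on the strictness of this unproven global estimate, it inherits the same gap.
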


Moving from the optimal lower bound \eqref{eq:mainestimate}, the
proof of Theorem \ref{thm:mainresult} follows by proving that the
configuration in Figure \ref{fig} is the only one (up to translations
and axis-preserving isometries) realizing the equality case. This in
particular follows by checking that actually $p=0$ for the minimizer,
so that the two sets $A$ and $B$ have disjoint projections.

Proposition \ref{prop:estimateforenergy} is proved in Subsection
\ref{sec:proof:mainresult} below. 
As a preparation, in the
next subsection we state  some   auxiliary results  
whose  proofs  are postponed to  Section
\ref{sec:proofsoflemmas}.

\subsection{Statements of auxiliary results}\label{sec:auxiliary}

First, we will state a slicing result for the double-bubble energy. To this end, we will assume that the vertical direction corresponds to the last coordinate. Letting $G \subset \mathbb{R}^3$ be any set of finite perimeter, we indicate by $G_t = G \cap (\mathbb{R}^2 \times \{t\})$ the horizontal slice at level $t$
in the $z$ direction and denote by $\pi_3 G$ the orthogonal projection of $G$ on $\R^2 \times \lbrace 0 \rbrace$.

\begin{lemma}[Slicing lemma]\label{lem:symmetrization-new}
Suppose that $A$ and $B$ are disjoint bounded sets of finite perimeter. Then,
\begin{equation*}
 E(A,B) \geq \int_{\mathbb{R}} E_{2D}(a(t), b(t)) \, {\rm d}t + 2 \mathcal{H}^{2}( \pi_3 A \cup \pi_3 B   ) + \mathcal{H}^{2}( \pi_3 A \cap \pi_3 B   ) .
\end{equation*}
\end{lemma}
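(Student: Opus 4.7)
The plan is to split the $\ell_1$-surface energy of each reduced boundary into a horizontal and a vertical part, then identify each piece via a BV-slicing (Fubini/Vol'pert-type) identity. Since $A$ and $B$ are disjoint sets of finite perimeter, so is $A\cup B$, and the identity $\partial^*(A\cup B) = (\partial^* A\cup \partial^* B)\setminus(\partial^* A\cap\partial^* B)$ modulo $\mathcal{H}^2$-null sets, together with $\nu_A=-\nu_B$ along the interface, yields
$$\ell_1(\partial^*(A\cup B)) = \ell_1(\partial^* A)+\ell_1(\partial^* B) - 2\,\ell_1(\partial^* A\cap\partial^* B),$$
and hence
$$2E(A,B) = \ell_1(\partial^* A)+\ell_1(\partial^* B)+\ell_1(\partial^*(A\cup B)).$$
This reformulation involves only reduced boundaries of sets of finite perimeter, circumventing any direct slicing of the interface $\partial^* A\cap\partial^* B$.

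For the \emph{horizontal contribution}, I would use the anisotropic slicing identity
$$\int_{\partial^* C}(|\nu_1|+|\nu_2|)\,{\rm d}\mathcal{H}^2 = \int_\R \ell_{1,2D}(\partial^*_{2D} C_t)\,{\rm d}t$$
(valid for every bounded finite-perimeter $C\subset\R^3$, by summing the coordinate directional variations $|D_1\chi_C|+|D_2\chi_C|$ and applying one-dimensional Fubini in the third variable). Applied to $A$, $B$, and $A\cup B$, and using $(A\cup B)_t=A_t\cup B_t$, the horizontal part of $2E(A,B)$ reads
$$\int_\R \bigl[\ell_{1,2D}(\partial^*_{2D} A_t)+\ell_{1,2D}(\partial^*_{2D} B_t)+\ell_{1,2D}(\partial^*_{2D}(A_t\cup B_t))\bigr]\,{\rm d}t.$$
Two-dimensional inclusion-exclusion for the disjoint planar sets $A_t,B_t$ rewrites the bracket as twice the $\ell_1$-energy of the planar configuration $(A_t,B_t)$, which, being admissible with areas $a(t)$ and $b(t)$, is bounded from below by $2E_{2D}(a(t),b(t))$ by the very definition \eqref{eq:2D_energy}. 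This produces the integral term of the claim after dividing by $2$.

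For the \emph{vertical contribution}, I introduce the fibre $C^y:=\{t\in\R:(y,t)\in C\}$, for which the coordinate slicing identity reads
$$\int_{\partial^* C}|\nu_3|\,{\rm d}\mathcal{H}^2 = \int_{\R^2}\mathcal{H}^0(\partial^* C^y)\,{\rm d}y.$$
Applied to $A$, $B$, $A\cup B$, and using $(A\cup B)^y=A^y\cup B^y$, the vertical part of $2E(A,B)$ equals
$$\int_{\R^2}\bigl[\mathcal{H}^0(\partial^* A^y)+\mathcal{H}^0(\partial^* B^y)+\mathcal{H}^0(\partial^*(A^y\cup B^y))\bigr]\,{\rm d}y.$$
Every nonempty bounded finite-perimeter subset of $\R$ has reduced boundary of $\mathcal{H}^0$-mass at least $2$. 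Hence, pointwise in $y$, the integrand vanishes off $\pi_3 A\cup\pi_3 B$, is at least $2+2=4$ on $(\pi_3 A\cup\pi_3 B)\setminus(\pi_3 A\cap\pi_3 B)$ (where one of $A^y,B^y$ is empty, so $A^y\cup B^y$ coincides with the nonempty one), and is at least $2+2+2=6$ on $\pi_3 A\cap\pi_3 B$. Integrating and halving gives precisely $2\mathcal{H}^2(\pi_3 A\cup\pi_3 B)+\mathcal{H}^2(\pi_3 A\cap\pi_3 B)$, completing the claimed inequality.

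The main technical step is the anisotropic BV-slicing identity expressing each $\int|\nu_i|\,{\rm d}\mathcal{H}^2$ as a Fubini-type integral of lower-dimensional perimeters, which also guarantees that the parameter-dependent slices $A_t$, $B_t$, $A^y$, $B^y$ and their unions are bona fide sets of finite perimeter for almost every parameter; once this is granted, the rest is elementary inclusion-exclusion and a pointwise count on one-dimensional fibres, combined with the planar minimality recorded in Proposition~\ref{thm:mainresult_planar}.
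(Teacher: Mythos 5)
Your argument is correct, and while it follows the same overall strategy as the paper --- decompose the $\ell_1$-energy into the in-plane normal components (handled by horizontal slicing and the planar minimal energy) and the $e_3$-component (handled by counting points on vertical fibres) --- it differs in one genuine and worthwhile way: how the interface term is treated. The paper works directly with the single rectifiable set $F=\partial^*A\cup\partial^*B$, writes $E(A,B)=\ell_1(F)$, and slices $F$ itself using the general coarea-type formulas for rectifiable sets; it must then identify $F_t$ (for a.e.\ $t$) with an admissible boundary for the planar problem to get $\ell_1(F_t)\ge E_{2D}(a(t),b(t))$, and argue separately that fibres over $\pi_3A\cap\pi_3B$ meet $F$ in at least $3$ points. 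You instead double the energy via $2E(A,B)=\ell_1(\partial^*A)+\ell_1(\partial^*B)+\ell_1(\partial^*(A\cup B))$, so that every slicing identity is applied only to the reduced boundary of a bona fide set of finite perimeter (Vol'pert/BV slicing of characteristic functions), and the interface is recovered afterwards by inclusion--exclusion within each planar slice and each one-dimensional fibre; in particular the ``at least $3$'' count becomes the automatic $\tfrac12(2+2+2)$. Your route buys a more elementary toolkit and makes the passage to $E_{2D}(a(t),b(t))$ immediate from the definition \eqref{eq:2D_energy}; the paper's route avoids the factor-of-two bookkeeping and the (standard but not entirely free) identity $\partial^*(A\cup B)=(\partial^*A\cup\partial^*B)\setminus(\partial^*A\cap\partial^*B)$ up to $\mathcal{H}^2$-null sets, which you correctly invoke together with $\nu_A=-\nu_B$ on the interface. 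One small point, shared equally by the paper: the projections $\pi_3A$, $\pi_3B$ should be read as essential projections (sets of $y$ whose fibre has positive length), so that a.e.\ nonempty fibre genuinely contributes two boundary points; this is a matter of interpretation, not a gap.
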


Let  us state a result showing that the
assumption $p\leq 1/3$ is not restrictive, up to relabeling the
axes. Indeed, we have the following.

% The next result concerns the choice of the coordinate direction. Since
% the coefficient $p$ defined in formula \eqref{eq:definitionofp}
% depends on such a  choice, we next show that we can find a
% coordinate direction such that the value of $p$ is at most 
% $1/3$. 

\begin{lemma}[Upper bound on $p$]\label{lem:psmaller13}
Suppose that $(A,B)$ is an optimal configuration. Then, we can pick a
coordinate direction such that $p \leq 1/3$, with $p$ defined in
\eqref{eq:definitionofp} on the basis of the projections $m$, $m_A$,
and $m_B$ along that
coordinate direction. 
\end{lemma}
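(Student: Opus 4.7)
I plan to argue by contradiction. For each coordinate direction $i\in\{1,2,3\}$, let $m^i$, $m^i_A$, $m^i_B$ and $p^i=(m^i_A+m^i_B)/m^i-1$ be the analogues of $m$, $m_A$, $m_B$, $p$ when the projection is taken along the $i$-th axis. Assume that $p^i>1/3$ for every $i\in\{1,2,3\}$; I will then derive $E(A,B)>E_{\min}$, which contradicts the optimality of $(A,B)$, since the cuboid configuration of Theorem~\ref{thm:mainresult} is admissible and has energy $E_{\min}=3\cdot 12^{1/3}(V_A+V_B)^{2/3}$ by \eqref{eq: Emin}.

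The first and main step is to establish the direction-decomposed lower bound
\[
E(A,B)\;\geq\;\sum_{i=1}^{3}(2+p^i)\,m^i.
\]
I would start from the identity
\[
E(A,B)=\tfrac12\bigl(\ell_1(\partial^*A)+\ell_1(\partial^*B)+\ell_1(\partial^*(A\cup B))\bigr),
\]
which holds for any pair of disjoint finite-perimeter sets: since the outer normals of $A$ and $B$ are opposite on the interface $\partial^*A\cap\partial^*B$, one has $\partial^*(A\cup B)=(\partial^*A\cup\partial^*B)\setminus(\partial^*A\cap\partial^*B)$ up to $\mathcal H^2$-null sets, and the identity follows by inclusion--exclusion applied to $\ell_1(\partial^*A)+\ell_1(\partial^*B)$. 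Next, the elementary projection inequality $\int_{\partial^*G}|\nu_i|\,\mathrm d\mathcal H^2\geq 2|\pi_iG|$---valid for any finite-perimeter set $G$ because almost every line parallel to the $i$-th coordinate axis meets $\partial^*G$ in at least two points---applied to $G=A,\,B,\,A\cup B$ and summed over $i$ yields exactly the displayed estimate.

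The second step is to invoke the Loomis--Whitney inequality $(V_A+V_B)^2\leq m^1m^2m^3$, combined with the AM--GM inequality, to obtain $m^1+m^2+m^3\geq 3(V_A+V_B)^{2/3}$. Under the contradiction hypothesis $p^i>1/3$ for all $i$, the first step then strengthens to
\[
E(A,B)\;>\;\tfrac73(m^1+m^2+m^3)\;\geq\;7(V_A+V_B)^{2/3},
\]
and the contradiction with $E(A,B)\leq E_{\min}$ reduces to the numerical check $7^3=343>324=(3\cdot 12^{1/3})^3$.

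The main delicate point I anticipate is the bookkeeping in step one: the decomposition of $\partial^*(A\cup B)$ and the application of the projection inequality must be handled in the finite-perimeter setting, relying on standard facts from \cite{Ambrosio-Fusco-Pallara}. (Finiteness of $m^1,m^2,m^3$ for an optimal $(A,B)$ is immediate, otherwise the lower bound itself forces $E(A,B)=\infty$.) Everything else---Loomis--Whitney, AM--GM, and the final arithmetic comparison---is routine and does not require any new ingredient.
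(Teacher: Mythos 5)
Your argument is correct, and its core coincides with the paper's: both proofs assume $p^i>1/3$ for all three directions and derive the lower bound $E(A,B)\geq\sum_{i=1}^3(2+p^i)m^i>\frac73\sum_i m^i$ from a projection/slicing count. You reach that bound by splitting $E(A,B)=\frac12\bigl(\ell_1(\partial^*A)+\ell_1(\partial^*B)+\ell_1(\partial^*(A\cup B))\bigr)$ and applying the two-point count to $A$, $B$, and $A\cup B$ separately; the paper instead counts directly on $F=\partial^*A\cup\partial^*B$, noting that vertical lines over $\pi_iA\cap\pi_iB$ meet $F$ at least three times --- these are equivalent bookkeepings of the same estimate, and yours is covered by the same slicing identity \eqref{eq:maggi-different}. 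The genuine divergence is in how the contradiction is closed. The paper compares $\frac73\sum_i m^i=7\overline m$ with the energy of an explicit competitor (a cube of face area $\overline m$ split by a square interface) and invokes the $\ell_1$-isoperimetric inequality to argue this competitor encloses at least the required volume; this avoids computing $E_{\rm min}$ but leaves the volume-matching step somewhat implicit. You instead use Loomis--Whitney, $(V_A+V_B)^2\leq m^1m^2m^3$, plus AM--GM to get $\sum_i m^i\geq 3(V_A+V_B)^{2/3}$, and compare directly with the explicit value $E_{\rm min}=3\cdot 12^{1/3}(V_A+V_B)^{2/3}$ via $7^3=343>324$. This is cleaner and fully quantitative: it replaces the competitor construction and the anisotropic isoperimetric inequality by a standard projection inequality and an elementary numerical check, at the mild cost of needing the exact constant in $E_{\rm min}$ (which the paper computes anyway). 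Both routes are sound; yours is arguably the tighter write-up of this step.
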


 The last technical lemma concerns the properties of
some auxiliary functions depending on the function $f$ defined in
Proposition \ref{prop:chip}. We separated it from the proof of the
main result in order to simplify the argument, as this 
algebraic calculation simply follows from the very definition of $f$. % which allows us to find the monotonicity of the auxiliary functions $g_A^r$ and $g_B^r$. 

\begin{lemma}[Functions $g_A^r$ and $g_B^r$]\label{lem:appendix}
Suppose that 
\begin{equation}\label{eq:assumptionsonmamb}
m_A \geq \frac{2+p}{3} m > \frac{1+2p}{3} m \geq m_B,    
\end{equation}
where $p \in [0,1/3]$. Define
\begin{align}\label{g-def}
g_A^r(\alpha) & := \frac{1+\alpha}{r-\alpha} \Big(\frac{f(\alpha)}{ \min\lbrace \sqrt{m}, \sqrt{(1+\alpha) m_A} \rbrace   } - \frac{4\sqrt{6}}{\sqrt{4+2p}\sqrt{m}} \Big), \notag  \\
g_B^r(\beta) & := \frac{1+\beta}{1-r\beta} \Big(\frac{f(\beta)}{ \min\lbrace \sqrt{m}, \sqrt{(1+\beta) m_B} \rbrace   } - \frac{4\sqrt{6}}{\sqrt{4+2p}\sqrt{m}} \Big).
\end{align}
Then, we have
\begin{align*}
 {\rm (a)}  \ \ \min_{\alpha \in [0,r]} (g_A^r(\alpha)) =  g_A^r(0)  \le 0,  \quad   \quad \quad  {\rm (b)}  \ \ \min_{\beta \in [0,\frac{1}{r}]} g_B^r(\beta) = g_B^r(0)  > 0. 
\end{align*}
\end{lemma}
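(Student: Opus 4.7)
The plan is direct algebraic verification, proceeding case by case on the pieces of $f$ and on the two branches of the $\min$. I first evaluate at $0$: since $m_A, m_B \le m$, both minima at the origin select the second argument, and using $f(0) = 4$ together with the identity $\sqrt{6}/\sqrt{4+2p} = \sqrt{3}/\sqrt{2+p}$ one obtains
\begin{equation*}
g_A^r(0) = \frac{4}{r\sqrt{m}}\Bigl(\sqrt{\tfrac{m}{m_A}} - \sqrt{\tfrac{3}{2+p}}\Bigr), \qquad g_B^r(0) = \frac{4}{\sqrt{m}}\Bigl(\sqrt{\tfrac{m}{m_B}} - \sqrt{\tfrac{3}{2+p}}\Bigr).
\end{equation*}
The hypotheses $m_A \ge \tfrac{2+p}{3}m$ and $m_B \le \tfrac{1+2p}{3}m < \tfrac{2+p}{3}m$ immediately yield $g_A^r(0) \le 0$ and $g_B^r(0) > 0$.

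For the minimization in (a), I split $[0, r]$ into $[0, 1/2)$ and $[1/2, r]$. On $[1/2, r]$, the reflection $f(x) = f(1/x)$ for $x > 1$ gives $f(\alpha) = 2\sqrt{6}$, and the bound $(1+\alpha)m_A \ge \tfrac{3}{2}\cdot\tfrac{2+p}{3}m \ge m$ (using $p \ge 0$) forces $\min = \sqrt m$, so
\begin{equation*}
g_A^r(\alpha) = \frac{1+\alpha}{r-\alpha}\cdot\frac{2\sqrt{6}}{\sqrt{m}}\cdot\frac{\sqrt{4+2p}-2}{\sqrt{4+2p}} \ge 0 \ge g_A^r(0).
\end{equation*}
On $[0, 1/2)$, I work piece by piece on $[0, r_*]$ and $(r_*, 1/2)$, and within each piece on whether $(1+\alpha)m_A \le m$. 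In every sub-case, $g_A^r$ admits an explicit closed form and the pointwise inequality $g_A^r(\alpha) \ge g_A^r(0)$ reduces, after clearing denominators and squaring, to an elementary inequality between combinations of $\sqrt{\alpha}$, $\sqrt{1+\alpha}$, $\sqrt m$, $\sqrt{m_A}$, and $\sqrt{2+p}$, the defining formula for $r_*$ ensuring continuity of $f$ at the interface.

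For (b), I exploit that the hypothesis $m_B \le \tfrac{1+2p}{3}m$ together with $p \le 1/3$ gives $(1+\beta)m_B \le \tfrac{1+2p}{2}m < m$ for all $\beta \in [0, 1/2]$, so on this sub-interval $\min = \sqrt{(1+\beta)m_B}$ and $g_B^r(\beta) \ge g_B^r(0)$ reduces, using $(1+\beta)/(1-r\beta) \ge 1$ and clearing $\sqrt{m_B}$, to the $m_B$-independent inequality $f(\beta) \ge 4\sqrt{1+\beta}$; this is easy to check piece by piece on $f$ and holds on $[0, 1/2]$ with equality at both endpoints. On $[1/2, 1/r]$, where $f(\beta) = 2\sqrt{6}$, I split again on the two branches of $\min$: in the $\min = \sqrt m$ regime (which requires $\beta \ge m/m_B - 1$), $g_B^r(\beta) = \tfrac{(1+\beta)\cdot 2\sqrt 6}{(1-r\beta)\sqrt m}\cdot\tfrac{\sqrt{4+2p}-2}{\sqrt{4+2p}}$ and the growth of $(1+\beta)/(1-r\beta)$ compensates for the drop of the bracket below $g_B^r(0)$ through a direct algebraic check, while the sub-case $\min = \sqrt{(1+\beta)m_B}$ on $[1/2, 1/r]$ is handled analogously. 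The main obstacle is the bookkeeping of the case analysis; every individual verification is elementary, but the middle piece $(r_*, 1/2]$ of $f$ yields somewhat intricate algebraic expressions.
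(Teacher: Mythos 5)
Your evaluation at $0$ is correct and matches the paper, your dispatch of the regime $\alpha\ge 1/2$ (where $f\equiv 2\sqrt6$ and the minimum is $\sqrt m$) is correct, and your treatment of part (b) on $[0,1/2]$ via the $m_B$-independent inequality $f(\beta)\ge 4\sqrt{1+\beta}$ is a clean, correct argument that is actually slicker than the paper's at that point. However, the heart of the lemma is left unproved. For part (a) on $[0,1/2)$ you assert that in each sub-case the pointwise inequality $g_A^r(\alpha)\ge g_A^r(0)$ ``reduces, after clearing denominators and squaring, to an elementary inequality.'' This is a four-parameter inequality in $\alpha$, $m/m_A\in[1,3/2]$, $p\in[0,1/3]$, and $r\in[1/2,2]$, and it is \emph{tight}: the paper's verification shows equality occurs, e.g., at $m/m_A=1$ and at $m/m_A=3/2$ in the comparison of the branch values. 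Squaring a tight multi-parameter inequality does not make it elementary, and no crude bound will close it. The paper makes the verification tractable by two reductions you do not have: (i) an algebraic identity expressing $g_A^r$ as a nonnegative multiple of $g_A^1$ or $g_A^{1/2}$, which collapses the $r$-dependence to two values; and (ii) a sign-change analysis of the derivatives of the two branch functions ($h_1^r$, $h_3^r$ in the paper's notation, corresponding to your two branches of the $\min$), showing each derivative changes sign at most once from negative to positive, so that the extremum is attained at one of finitely many candidate points ($0$, $r_*$, $m/m_A-1$), whose values are then compared explicitly using $f(r_*)=\tfrac{20}{41}(7+2\sqrt2)$. Without some such reduction, ``every individual verification is elementary'' is a claim, not a proof.

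A second, more localized gap is in part (b) on $[1/2,1/r]$. There the bracket $\frac{f(\beta)}{\min\{\sqrt m,\sqrt{(1+\beta)m_B}\}}-\frac{4\sqrt6}{\sqrt{4+2p}\sqrt m}$ can drop to $0$ or below while the prefactor $\frac{1+\beta}{1-r\beta}$ blows up as $\beta\to 1/r$ (for $r=1/2$ this is $\beta\to2$, which is reachable since $m/m_B-1\ge2$ when $p=0$). The product is then of the form $\infty\cdot 0$ or $\infty\cdot(\text{negative})$, and one must actually compute the limit: the paper distinguishes $p>0$, $p=0$ with $m_B/m<1/3$, and the boundary case $p=0$, $m_B/m=1/3$, where the limit equals $-\sqrt6$ and must be compared against $g_B^{1/2}(0)\cdot\sqrt m/2=\sqrt6-2\sqrt3$. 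Your phrase ``the growth of $(1+\beta)/(1-r\beta)$ compensates \dots through a direct algebraic check'' does not engage with this singular endpoint, which is exactly where the positivity claim in (b) is closest to failing.
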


\subsection{Proof of Theorem \ref{thm:mainresult}}\label{sec:proof:mainresult}

Moving from the discussion in  the beginning
of Section \ref{sec:mainproof}, the first step in the proof of
Theorem \ref{thm:mainresult} is to show that inequality
\eqref{eq:mainestimate} holds for all admissible configurations
$(A,B)$. Therefore, we first prove Proposition
\ref{prop:estimateforenergy}. Recall that the proofs of the
auxiliary Lemmas \ref{lem:symmetrization-new}--\ref{lem:appendix}
are postponed to Section \ref{sec:proofsoflemmas}. 

\begin{proof}[Proof of Proposition \ref{prop:estimateforenergy}]
{\bf Step 1.}  By Lemma \ref{lem:symmetrization-new} and the definition of the projections $m$ and $p$, a first lower bound for the energy of a configuration $(A,B)$ is given by
$$ E(A,B) \geq (2+p)m +   \int_\R E_{2D}(a(t),b(t)) \, {\rm d}t. $$
We now use the expression for  $E_{2D}(a(t),b(t))$ from 
Proposition \ref{prop:chip} to  get
\begin{align} 
E(A,B) \geq (2+p)m +   \int_{\mathcal{T}_A}  \hspace{-0.1cm}
  &\sqrt{a(t) + b(t)} f(\alpha(t))\, {\rm d}t \nonumber \\ 
&+  \int_{\mathcal{T}_B}  \hspace{-0.1cm}   \sqrt{a(t) + b(t)} f(\beta(t))\, {\rm d}t + \int_{\mathcal{T}_0} 2\sqrt{6} \sqrt{a(t) + b(t)} \, {\rm d}t. \label{eq:foursummands}
\end{align}
By definition, we have $a(t) + b(t) \leq m$ for all $t \in
\mathbb{R}$. We first estimate the last addend in \eqref{eq:foursummands} by 
\begin{equation}\label{eq:basicestimateint0}
\int_{\mathcal{T}_0} 2\sqrt{6} \sqrt{a(t) + b(t)} \, {\rm d}t \geq \int_{\mathcal{T}_0} \frac{2\sqrt{6}}{\sqrt{m}} (a(t) + b(t))\, {\rm d}t = \frac{2\sqrt{6}}{\sqrt{m}} U_0,
\end{equation}
with strict inequality if and only if  $a(t) + b(t) < m$ on a subset of $\mathcal{T}_0$ with positive $\mathcal{L}^1$-measure.

Now, we estimate the second addend in \eqref{eq:foursummands}. We again use that $a(t) + b(t) \leq m$ for all $t \in \mathbb{R}$, this time together with 
$$a(t) + b(t) \leq (1 + \alpha(t)) m_A \quad \mbox{for } t \in \mathcal{T}_A,$$
to get
\begin{align}\label{eq:basicestimateinta}
\int_{\mathcal{T}_A}   \sqrt{a(t) + b(t)} f(\alpha(t))\, {\rm d}t \geq \int_{\mathcal{T}_A}   (a(t) + b(t)) \frac{f(\alpha(t))}{ \min\lbrace \sqrt{m}, \sqrt{((1+\alpha(t)) m_A} \rbrace   }\, {\rm d}t
\end{align}
 with strict inequality if and only if 
$$a(t) + b(t) < \min\lbrace  {m},  {((1+\alpha(t)) m_A} \rbrace$$ 
on a subset of $\mathcal{T}_A$  with positive
$\mathcal{L}^1$-measure. One can verify that the function
$f(x)/\sqrt{1+x}$ is increasing on the interval $[0,2/\sqrt{3}-1]$ and
decreasing on the interval $[2/\sqrt{3}-1,1/2]$. Therefore, the
minimum is exactly attained in $0$ or $\frac12$, and one can check
that its values at $0$ and $1/2$ %both endpoints of
% $[0,1/2]$
are equal, namely 
$$f(0) = 4 = \frac{f(1/2)}{\sqrt{1+1/2}}.$$ 
Moreover, we have $f(x) = 2\sqrt{6}$ for $1/2 \le x \le 2$.  Therefore,
\begin{equation}\label{strici1}
\int_{\mathcal{T}_A}   \sqrt{a(t) + b(t)} f(\alpha(t))\, {\rm d}t \geq \int_{\mathcal{T}_A}   (a(t) + b(t)) \min \left\{   \frac{2\sqrt{6}}{\sqrt{m}}, \frac{4}{\sqrt{m_A}} \right\}\, {\rm d}t,
\end{equation} 
with strict inequality if $m_A \le {2m}/{3}$ and  $a(t) + b(t) < m $ on a subset of $\mathcal{T}_A$  with  positive $\mathcal{L}^1$-measure. In fact, if  $m \le   {((1+\alpha(t)) m_A}$, we have strict inequality already in \eqref{eq:basicestimateinta}, and if  $m  >  {((1+\alpha(t)) m_A}$, we have strict inequality between the right-hand sides of \eqref{eq:basicestimateinta} and  \eqref{strici1} since  $m_A \le {2m}/{3}$ implies $\min \lbrace \frac{2\sqrt{6}}{\sqrt{m}}, \frac{4}{\sqrt{m_A}} \rbrace =    \frac{2\sqrt{6}}{\sqrt{m}}$.

We similarly estimate the third addend in \eqref{eq:foursummands}:  since $a(t) + b(t) \leq m$ for all $t \in \mathbb{R}$ and 
$$a(t) + b(t) \leq (1 + \beta(t)) m_B \quad \mbox{for } t \in \mathcal{T}_B,$$
 we have
\begin{align}\label{eq:basicestimateintb}
\int_{\mathcal{T}_B}   \sqrt{a(t) + b(t)} f(\beta(t))\, {\rm d}t \geq \int_{\mathcal{T}_B}   (a(t) + b(t)) \frac{f(\beta(t))}{ \min\lbrace \sqrt{m}, \sqrt{((1+\beta(t)) m_B} \rbrace   }\, {\rm d}t,
\end{align}
so in particular
\begin{align}\label{strici2}
\int_{\mathcal{T}_B}   \sqrt{a(t) + b(t)} f(\beta(t))\, {\rm d}t \geq     \int_{\mathcal{T}_B}   (a(t) + b(t)) \min \left\{   \frac{2\sqrt{6}}{\sqrt{m}}, \frac{4}{\sqrt{m_B}}\right\}\, {\rm d}t,  
\end{align}
with strict inequality if $m_B \le {2m}/{3}$ and  $a(t) + b(t) < m $ on a subset of $\mathcal{T}_B$  with  positive $\mathcal{L}^1$-measure. We collect these estimates and deduce from \eqref{eq:foursummands} that  
\begin{align}\label{eq:almostfinalestimatefore}
E(A,B) \ge (2+p)m + \int_{\mathcal{T}_A}   (a(t) + b(t)) &\min \left\{   \frac{2\sqrt{6}}{\sqrt{m}}, \frac{4}{\sqrt{m_A}} \right\}\, {\rm d}t \\ 
& +  \int_{\mathcal{T}_B}   (a(t) + b(t)) \min \left\{   \frac{2\sqrt{6}}{\sqrt{m}}, \frac{4}{\sqrt{m_B}} \right\}\, {\rm d}t +   \frac{2\sqrt{6}}{\sqrt{m}}U_0. \notag 
\end{align}
To simplify a later argument in %at the beginning of
Step 2 below, we %of this proof, let us
rewrite the above inequality in an equivalent form. Let us
introduce the notation %a slightly altered notation.  Using the shorthand 
$$\tilde{p} = \sqrt{6}\sqrt{1+p/2} - \sqrt{6}$$ 
and   
$$\hat{U} = \frac{4\sqrt{6}}{\sqrt{4+2p}\sqrt{m}} (U_A + U_B) +
\frac{2\sqrt{6}}{\sqrt{m}} U_0,$$
where $\hat U$ corresponds to the last two terms in the
right-hand side of \eqref{eq:mainestimate}. By adding and
subtracting $ \frac{4\sqrt{6}}{\sqrt{4+2p}\sqrt{m}} (a(t) + b(t)) $
to  both integrals,  and recalling that  integration of $a(t) + b(t)$ over $\mathcal{T}_A$ recovers $U_A$ (and similarly for $B$), 
we can write inequality \eqref{eq:almostfinalestimatefore}  as  
\begin{align}\label{eq: 1}
E(A,B) \geq (2+p)m +   \int_{\mathcal{T}_A}  &\frac{4 (a(t) + b(t))}{\sqrt{4+2p}\sqrt{m}\sqrt{m_A}}  \min\big\{ \tilde{p}\sqrt{m_A},  \sqrt{4+2p} \sqrt{m}     - \sqrt{6m_A} \big\} \, {\rm d}t \\ &  +  \int_{\mathcal{T}_B} \frac{4 (a(t) + b(t))}{\sqrt{4+2p}\sqrt{m}\sqrt{m_B}}  \min\big\{ \tilde{p}\sqrt{m_B},  \sqrt{4+2p} \sqrt{m}     - \sqrt{6m_B} \big\} \, {\rm d}t + \hat{U}. \notag
\end{align} 
%better name for $\hat{U}$ because it is an important quantity? 

%\comment{\UUU For me this notation is OK }

%\UUU

{\bf \flushleft Step 2.} In order to proceed with the proof of
\eqref{eq:mainestimate}, we shall check that the integrals on the
right-hand side of \eqref{eq: 1} are not negative. We separately consider the
following mutually exclusive alternatives: % consider the following three possibilities concerning signs of the integrands in \eqref{eq: 1}.

{\flushleft (1)}  At least one of the sets $\mathcal{T}_A$ and
$\mathcal{T}_B$ has zero measure. Then, using \eqref{eq: V*}
we get that both $\mathcal{T}_A$ and $\mathcal{T}_B$ have zero
measure, and \eqref{eq:mainestimate} follows. Moreover, the inequality is strict if and only if $a(t) + b(t) < m$ on a subset of $\mathcal{T}_0$ of positive $\mathcal{L}^1$-measure, see \eqref{eq:basicestimateint0}.

{\flushleft (2)} Both sets $\mathcal{T}_A, \mathcal{T}_B$ have positive measure and 
\begin{equation}\label{eq:extraconditioninstep2}
\sqrt{4+2p}\sqrt{m} > \sqrt{6 m_A} \quad \mbox{ and } \quad \sqrt{4+2p}\sqrt{m} > \sqrt{6 m_B}.
\end{equation}
The inequality \eqref{eq:mainestimate} follows from
\eqref{eq: 1} since the integrands are positive. Furthermore, the
inequality is strict whenever $p>0$. If  $p=0$, given that
$m_A < {2m}/{3}$ and  $m_B < {2m}/{3}$, the separate estimates for
$\mathcal{T}_0$, $\mathcal{T}_A$ and $\mathcal{T}_B$ which are given
in inequalities \eqref{eq:basicestimateint0}, \eqref{strici1}, and
\eqref{strici2} respectively, imply that the inequality is strict
whenever $a(t) + b(t) < m$ on a subset of $\mathcal{T}_0 \cup
\mathcal{T}_A \cup \mathcal{T}_B$ with positive
$\mathcal{L}^1$-measure.  Since in this case we have $m = m_A + m_B$,
equality in \eqref{eq:mainestimate} shows $a(t) = m_A$ and $b(t) =
m_B$ for a.e.\ $t$. In view of  \eqref{volumina}, this gives $r =
V_B/V_A = {m_B}/{m_A}$.  Thus, $r a(t) = b(t)$ for a.e.\ $t$, which implies $\mathcal{L}^1(\mathcal{T}_A) = \mathcal{L}^1(\mathcal{T}_B) = 0$, and we are back in case (1).

{\flushleft (3)} Both sets $\mathcal{T}_A, \mathcal{T}_B$ have positive measure and condition \eqref{eq:extraconditioninstep2} fails, i.e., we have 
$$\sqrt{4+2p}\sqrt{m} \le \sqrt{6 m_A} \quad \mbox{or} \quad \sqrt{4+2p}\sqrt{m} \le \sqrt{6 m_B},$$ which we rewrite as
\begin{align}\label{eq: max}
\max \lbrace m_A, m_B \rbrace \geq \frac{2+p}{3} m.
\end{align}
The rest of the proof (including Step 3) concerns this case. Recall the definition of the functions $g^r_A$ and  $g^r_B$ in Lemma \ref{lem:appendix}.  Using these definitions,  we can rewrite estimate \eqref{eq:basicestimateinta} as  
\begin{align*}
\int_{\mathcal{T}_A} \sqrt{a(t) + b(t)} f(\alpha(t))\, {\rm d}t \geq \int_{\mathcal{T}_A} a(t) (r-\alpha(t)) g_A^r(t) \, {\rm d}t + \frac{4\sqrt{6}}{\sqrt{4+2p}\sqrt{m}} U_A,
\end{align*}
and similarly we can rewrite \eqref{eq:basicestimateintb} as
\begin{align*}
\int_{\mathcal{T}_B} \sqrt{a(t) + b(t)} f(\beta(t))\, {\rm d}t \geq \int_{\mathcal{T}_B} b(t) (1-r\beta(t)) g_B^r(t) \, {\rm d}t + \frac{4\sqrt{6}}{\sqrt{4+2p}\sqrt{m}} U_B.
\end{align*}
 By plugging this into \eqref{eq:foursummands}, and using \eqref{eq:basicestimateint0}  for the last addend, we get
\begin{align*}
E(A,B) 
 \geq (2+p)m +   \int_{\mathcal{T}_A} a(t) (r-\alpha(t)) g_A^r(t) \, {\rm d}t \notag  +  \int_{\mathcal{T}_B}  b(t) (1-r\beta(t)) g_B^r(t) \, {\rm d}t + \hat{U}.
\end{align*}

 {\bf \flushleft Step 3.}  From now on, we assume without restriction that $m_A \geq m_B$. Hence, $m_B = (1+p)m - m_A$, and by \eqref{eq: max} we get 
$m_B \leq \frac{1+2p}{3} m$.  From  Lemma \ref{lem:appendix} we have   
\begin{align*}
{\rm (i)}  \ \ \min_{\beta \in [0, 1/r)} g^r_B(\beta) = g^r_B(0) >0, \quad   {\rm (ii)}  \ \ \max_{\alpha \in [0, r )} (-g^r_A(\alpha)) = - g^r_A(0) \ge  0,
\end{align*}
and thus
\begin{align*}
\inf_{t \in \mathcal{T}_B} g^r_B(\beta(t))  -\sup_{t\in \mathcal{T}_A} ( -g^r_A(\alpha(t))) \ge  g^r_B(0)  + g^r_A(0).
\end{align*}
As $g^r_A(0)  \le 0 $ for each $r$,  we get by monotonicity  in $r$ that 
\begin{align*} 
\inf_{t \in \mathcal{T}_B} g^r_B(\beta(t))  -\sup_{t\in \mathcal{T}_A} ( -g^r_A(\alpha(t))) \geq g^{1/2}_B(0)  + g^{1/2}_A(0) =   \frac{4}{\sqrt{m_B}} +  \frac{8}{\sqrt{m_A}} - \frac{12\sqrt{6}}{\sqrt{4+2p}\sqrt{m}}.
\end{align*}
By the definition of $p$, we have $m_B = m (1+p) - m_A$,  and thus 
\begin{align}
\inf_{t \in \mathcal{T}_B} g_B^r(\beta(t))  -\sup_{t\in \mathcal{T}_A} ( -g_A^r(\alpha(t))) &\geq \frac{1}{\sqrt{m}} \Big( \frac{4}{\sqrt{1+p-\frac{m_A}{m}}} +  \frac{8}{\sqrt{\frac{m_A}{m}}} - \frac{12\sqrt{6}}{\sqrt{4+2p}} \Big).
\end{align}
We minimize the sum of the first two addends in terms of ${m_A}/{m} \in [0,1]$ and get
\begin{align}
\inf_{t \in \mathcal{T}_B} g_B^r(\beta(t))  -\sup_{t\in \mathcal{T}_A} ( -g_A^r(\alpha(t))) \geq \frac{1}{\sqrt{m}} \Big( \frac{4(1+2^{{2}/{3}})^{{3}/{2}}}{\sqrt{1+p}  } - \frac{12\sqrt{6}}{\sqrt{4+2p}}   \Big).
\end{align}
%\comment{ Link to calculation: \href{https://www.wolframalpha.com/input?i=min+4\%2Fsqrt\%281+-+x\%29+\%2B+8\%2Fsqrt\%28x\%29}{here} } 
Optimizing with respect to $p \in [0,{1}/{3}]$  we conclude 
\begin{align*}
c_2:=\inf_{t \in \mathcal{T}_B} g_B^r(\beta(t)) > \sup_{t\in \mathcal{T}_A} (- g_A^r(\alpha(t)))=:c_1.
\end{align*}
%\comment{ Link to calculation: \href{https://www.wolframalpha.com/input?i=4\%281\%2B2\%5E\%282\%2F3\%29\%29\%5E\%283\%2F2\%29\%2F\%28sqrt\%281\%2Bx\%29\%29+-+12sqrt\%286\%29\%2Fsqrt\%284\%2B2x\%29\%2C+x+in+\%5B0\%2C0.5\%5D}{here} } 
 Then, denoting the value of the integrals in \eqref{eq: V*} by $U_*$, using that $r-\alpha \ge 0  $ on $\mathcal{T}_A$ and $1-r \beta \ge 0$ on $\mathcal{T}_B$ 
we get from Step 2 that 
\begin{align*}
E(A,B) 
& \geq (2+p)m +   \int_{\mathcal{T}_A} a(t) (r-\alpha(t)) g_A^r(t) \, {\rm d}t \notag  +  \int_{\mathcal{T}_B}  b(t) (1-r\beta(t)) g_B^r(t) \, {\rm d}t + \hat{U} \\
& = (2+p)m   - \int_{\mathcal{T}_A} a(t) (r-\alpha(t))    (-g_A^r(t)) \, {\rm d}t + \int_{\mathcal{T}_B}  b(t) (1-r\beta(t)) g_B^r(t) \, {\rm d}t \notag  + \hat{U} \\
& \geq (2+p)m + c_2 \int_{\mathcal{T}_B}  b(t) (1-r\beta(t)) \, {\rm d}t - c_1 \int_{\mathcal{T}_A} a(t) (r-\alpha(t)) \, {\rm d}t \notag  + \hat{U} \\
& = (2+p)m + (c_2 - c_1) U_* + \hat{U}.
\end{align*}
Recall that the term $\hat{U}$ is exactly the one appearing
in \eqref{eq:mainestimate}. Therefore, whenever $U_* > 0$, the
inequality in \eqref{eq:mainestimate} is strict. Hence, in the case of equality, we have $U_* =
0$, but by the definition of $U_*$ this implies that
$\mathcal{L}^1(\mathcal{T}_A) = \mathcal{L}^1(\mathcal{T}_B) = 0$, and
we are back to case (1) from Step 2 above.  This concludes the proof. 
\end{proof}

Having established Proposition \ref{prop:estimateforenergy}, we
can proceed to the proof of Theorem \ref{thm:mainresult}. 

\begin{proof}[Proof   of Theorem \ref{thm:mainresult}]
As 
$U_A+ U_B + U_0 =   V_A + V_B  $ and $p \geq 0$, we use
\eqref{eq:mainestimate} in order to estimate
\begin{align*}
E(A,B) \geq (2+p)m + \frac{4\sqrt{6}}{\sqrt{4+2p}\sqrt{m}} (V_A + V_B),
\end{align*}
where an equality is possible only if $p = 0$. Then, by the change of
variables $M = (2+p)m/3$, and optimizing with respect to all possible
values $M >0$ and $p \in [0,1/3]$, we find $E(A,B) \geq E_{\rm min} $  (see \eqref{eq: Emin})  with equality only if $\mathcal{L}^1(\mathcal{T}_A) = \mathcal{L}^1(\mathcal{T}_B) = 0$ and $a(t) + b(t) = m$ for $t \in \mathcal{T}_0 \setminus \mathcal{N}$.  Optimizing with respect to $M$ in \eqref{eq: Emin} indeed gives the minimal energy given in Theorem \ref{thm:mainresult}. Moreover, we observe that this energy is attained by the configuration indicated in Theorem \ref{thm:mainresult}.

We can characterize ground states uniquely as follows: the above
argument shows that any minimizer necessarily has $U_A=U_B = 0$,
$U_0=V_A + V_B$, and $p=0$. This yields that the projection of $A$ and
$B$ must have empty intersection
and each slice with $t \in \mathcal{T}_0 \setminus \mathcal{N}$ has the
same geometry. By the planar double-bubble result of Proposition
\ref{thm:mainresult_planar}, this geometry is then given by
two specific rectangles joined at one face, namely the
configuration from Theorem \ref{thm:mainresult}.  
\end{proof}

\section{Proofs of the  auxiliary results}\label{sec:proofsoflemmas}

\subsection{Proof of Lemma \ref{lem:symmetrization-new}}\label{sec:proof:lemsymmetrization-new}

Let us recall a classical slicing result for rectifiable sets, 
see for instance \cite[Section 18.3]{Maggi}. Suppose that $F
\subset \mathbb{R}^3$ is a
rectifiable set with $\mathcal{H}^2(F) < +\infty$. Recall that $F_t$ is  the horizontal slice of the set $F$ at level $t$ in direction $x_3$, i.e.,
\begin{equation*}
F_t = F \cap \{ (x_1,x_2,t): (x_1,x_2) \in \mathbb{R}^{2} \}.
\end{equation*}
 In a similar fashion, we let
$${F}^{(x_1,x_2)} :=   F \cap \{ (x_1,x_2,y): y \in \mathbb{R}\},$$
and get that $\mathcal{H}^0(F^{(x_1,x_2)})$ is finite for almost every $(x_1,x_2)$. For every Borel function $g\colon \mathbb{R}^3 \rightarrow [-\infty,\infty]$ with $g \geq 0$ or $g \in  L^1(\mathbb{R}^3)$ we have
\begin{align}\label{eq:maggi}
\int_{F} g \, \sqrt{1 - (\nu_{F} \cdot e_3)^2} \, {\rm d}\mathcal{H}^{2} = \int_{\mathbb{R}}  \int_{F_t} g \, {\rm d}\mathcal{H}^1 \, {\rm d}t,
\end{align}
where $\nu_F$ denotes the unit normal to   $F$, as well as 
\begin{align}\label{eq:maggi-different}
\int_{F}   |\nu_{F} \cdot e_3|\, {\rm d}\mathcal{H}^{2} = \int_{\R^2}  \mathcal{H}^0\big( F^{(x_1,x_2)} \big) \, {\rm d}(x_1,x_2).
\end{align}
 We are now ready to prove the slicing Lemma
 \ref{lem:symmetrization-new}. 

\begin{proof}[Proof of Lemma \ref{lem:symmetrization-new}]
Let $G := A \cup B$ and  $F := \partial^* A \cup \partial^* B$. We split the $\ell_1$-perimeter of $F $ into a `vertical' and `horizontal' part. To be exact, we write
\begin{equation*}
E(A,B) = \int_{F} |\nu_F|_1 \, {\rm d} \mathcal{H}^{2} = \int_{F} (|\nu'_F|_1 + |(\nu_F)_3|) \, {\rm d}\mathcal{H}^{2},
\end{equation*}
where we denote $x = (x',x_3)$ with $x' \in \mathbb{R}^{2}$. Recall that we use the notation $|\cdot|_1$ to denote the $\ell_1$-norm of a vector both in two and three dimensions.  The `vertical' part and the `horizontal' part is given by the integration of $|\nu'_F|_1 = |(\nu_F)_1| + |(\nu_F)_2|$  and  $|(\nu_F)_3|$ over $F$, respectively. To this end, we introduce the function
\begin{equation*}
\bar{g} = \frac{|(\nu_F)_1| + |(\nu_F)_2|}{\sqrt{1 - (\nu_F \cdot e_3)^2}}.
\end{equation*}
We use \eqref{eq:maggi} with $g = \bar{g}$ on $F$ (and $0$ otherwise)
and get
\begin{equation*}
\int_{F} |\nu'_F|_1 \, {\rm d}\mathcal{H}^{2} = \int_{F} \bar{g} \, \sqrt{1 - (\nu_F \cdot e_3)^2} \, {\rm d}\mathcal{H}^{2} = \int_{\mathbb{R}} \int_{F_t} \bar{g} \, {\rm d}\mathcal{H}^{1}  {\rm d}t = \int_\mathbb{R}  \ell_1(F_t)  \, {\rm d}t,
\end{equation*}
where in the last step we used the fact that $\frac{1}{ \sqrt{1 - (\nu_F \cdot e_3)^2}} \nu_F' \in \R^2$ is a unit normal to $F_t$.  Since for a.e. $t \in \mathbb{R}$ we have $F_t = (\partial^* A \cup \partial^* B)_t$, the value $\ell_1(F_t)$ corresponds to the double-bubble energy of the configuration $(A_t, B_t)$, and consequently we get 
$$ \ell_1(F_t)  \ge E_{2D}(a(t),b(t)),$$
because the areas of $A_t$ and $B_t$ are $a(t)$ and $b(t)$, respectively.  Therefore,  
\begin{equation}\label{eq:estimateofg1}
\int_{F} |\nu'_F|_1 \, {\rm d} \mathcal{H}^{2} \ge \int_\mathbb{R}  E_{2D}(a(t),b(t)) \, {\rm d}t.
\end{equation}
On the other hand,  the `horizontal' term can be  estimated in terms of the area of the largest horizontal slice.  By \eqref{eq:maggi-different} we get 
\begin{equation*}
\int_{F} |(\nu_F)_3| \, {\rm d}\mathcal{H}^{2} = \int_{\R^2}  \mathcal{H}^0\big( F^{(x_1,x_2)} \big) \, {\rm d}(x_1,x_2).
\end{equation*}
For  $\mathcal{H}^2$-a.e.\ $(x_1,x_2) \in (\pi_3 A \cup \pi_3 B)
\setminus (\pi_3 A \cap \pi_3 B  )$ we have  $ \mathcal{H}^0(
(\partial^* {G})^{(x_1,x_2)} ) \ge 2$, and  for
$\mathcal{H}^2$-a.e.\ $(x_1,x_2) \in   (\pi_3 A \cap \pi_3 B  )$ we
have  $ \mathcal{H}^0( (\partial^* {G})^{(x_1,x_2)} ) \ge 3$. This shows
\begin{equation}\label{eq:estimateofg2}
\int_{F} |(\nu_F)_3| \, {\rm d}\mathcal{H}^{2} = \int_{\R^2}  \mathcal{H}^0\big( F^{(x_1,x_2)} \big) \, {\rm d}(x_1,x_2) \ge  2 \mathcal{H}^{2}( \pi_3 A \cup \pi_3 B   ) + \mathcal{H}^{2}( \pi_3 A \cap \pi_3 B   ) . 
\end{equation}
Combining  \eqref{eq:estimateofg1} and \eqref{eq:estimateofg2} concludes the proof. 
\end{proof}

\subsection{Proof of Lemma \ref{lem:psmaller13}}\label{sec:proof:lemppsmaller13}
Denote by $(m_1,m_2,m_3)$ the areas of projections of $A \cup B$ on all coordinate directions, by $(m_1^A,m_2^A,m_3^A)$ the areas of projections of $A$, and by $(m_1^B,m_2^B,m_3^B)$ the areas of projections of $B$. Then, for $i = 1,2,3$ let $p_i = ({m_i^A + m_i^B})/{m_i} - 1$ and suppose by contradiction that $p_i > 1/3$ for all $i$.

Letting $F = \partial^* A \cup \partial^* B$, by arguing as in  the proof of inequality  \eqref{eq:estimateofg2} we get
\begin{align*}
E(A,B) = \int_{F}  |\nu_F|_1  \, {\rm d}\mathcal{H}^2 = \sum_{i=1}^3  \int_{F} |(\nu_F)_i| \, {\rm d}\mathcal{H}^{2}  \ge  \sum_{i=1}^3 \big(  2 \mathcal{H}^{2}( \pi_i A \cup \pi_i B   ) + \mathcal{H}^{2}( \pi_i A \cap \pi_i B   )  \big),
\end{align*}
where $\pi_i$ denotes the orthogonal projection on the plane with normal vector $e_i$. Then, we get 
\begin{equation*}
E(A,B) \geq \sum_{i=1}^3 (2m_i + p_i m_i) > \sum_{i=1}^3 (2m_i + \frac{1}{3} m_i) = \frac{7}{3} \sum_{i=1}^3 m_i.
\end{equation*}
Now, if we let $\overline{m} = ({m_1 + m_2 + m_3})/{3}$, we have
\begin{equation*}
E(A,B) > \frac{7}{3} \sum_{i=1}^3 m_i = 7\overline{m}.
\end{equation*}
But this is the double-bubble energy of the following configuration $(\hat{A},\hat{B})$: the set $\hat{A} \cup \hat{B}$ is a cube with area of each side equal to $\overline{m}$, both sets $\hat{A}$ and $\hat{B}$ are cuboids, and the interface between them is a square of area $\overline{m}$ which is parallel to one of the sides of the original cube. The placement of the interface is such that the volume ratio is preserved. By the isoperimetric inequality for the $\ell_1$-norm, the volume of $\hat{A} \cup \hat{B}$ is greater or equal to the volume of $A \cup B$. Thus, since  $E(A,B)  >E(\hat{A},\hat{B}) $, the original configuration $(A,B)$ was not optimal: a contradiction.

\subsection{Proof of Lemma \ref{lem:appendix}}\label{sec:proof:lemappendix}
We start by observing that   $g_A^r(0)\le 0$ and  $g_B^{r}(0) > 0$. In fact, using that ${m_A}/{m} \ge  ({2+p})/{3}$ (see assumption \eqref{eq:assumptionsonmamb}), we   get 
$$ \frac{4\sqrt{6}}{\sqrt{4+2p}} - \frac{4}{\sqrt{{m_A}/{m}}} \ge  \frac{4\sqrt{6}}{\sqrt{4+2p}} - \frac{4\sqrt{3}}{\sqrt{2+p}} =0  $$
all $p \in [0,{1}/{3}]$ which shows that  $g_A^r(0)\le 0$ since $f(0) = 4$. In a similar fashion, $g_B^{r}(0) > 0$ follows from 
$$  \frac{4}{\sqrt{{m_B}/{m}}} - \frac{4\sqrt{6}}{\sqrt{4+2p}}  \ge   \frac{4\sqrt{3}}{\sqrt{1+2p}} - \frac{4\sqrt{6}}{\sqrt{4+2p}} >0 $$
for all $p \in [0,{1}/{3}]$, where we used ${m_B}/{m} \le  {(1+2p)}/{3}$, see \eqref{eq:assumptionsonmamb}.

The main part of the proof consists now in checking that $g_A^r$ and
$g_B^r$ attain their minima at $0$. The proof is structured as
follows. In Step 1 we first show that the problem can be
reduced to the cases $r =1/2$ and $r=1$. In Step 2 we
introduce several auxiliary functions and use their specific
properties to prove the statement. The proof of these properties is then given in Steps 3--8.

{\bf \flushleft Step 1.}  Let us reduce the problem to specific values of $r$: we claim that it suffices to show
\begin{itemize}
\item[(i)] $\min_{\alpha \in [0, 1]} g_A^1(\alpha) = g_A^1(0)  $, 
\item[(ii)] $\min_{\alpha \in [0,  {1}/{2} ]} g_A^{ 1/2}(\alpha) =  g_A^{ 1/2}(0)$,
\item[(iii)] $\min_{\beta \in [0, 2]} g_B^{1/2}(\beta) =  g_B^{1/2}(0)$. 
\end{itemize}
 We assume for the moment that  the conditions (i)--(iii) hold,
 and show the statement of Lemma~\ref{lem:appendix}.  For simplicity, we use the abbreviation  
$$v(x,y) = \frac{f(x)}{ \min\lbrace \sqrt{m}, \sqrt{(1+x) y} \rbrace
} - \frac{4\sqrt{6}}{\sqrt{4+2p}\sqrt{m}}.$$
We start by proving (a) of Lemma \ref{lem:appendix}.  As $g^r_A(0) \le 0$, it is not restrictive to consider only $\alpha \in [0,r]$ with $g^r_A(\alpha) \le 0$.   Suppose first that $r \ge 1$. We write  
\begin{align*}
g^r_A(\alpha) = \frac{1}{r}   \frac{1+\alpha}{1 -\frac{\alpha}{r}} v(\alpha,m_A) = \frac{1}{r}   \frac{1-\alpha}{1 -\frac{\alpha}{r}}        \frac{1+\alpha}{1 -\alpha} v(\alpha,m_A) =  \frac{1}{r}   \frac{1-\alpha}{1 -\frac{\alpha}{r}} g_A^1(\alpha).
\end{align*}
If  $\alpha \in [0,1]$,  we have   $0 \le  \frac{1-\alpha}{1 -\frac{\alpha}{r}}  \le 1$. This, along with the above relation  and (i), implies  that 
$$0  \ge g_A^1(\alpha) \ge g_A^1(0),$$ and consequently 
$$g^r_A(\alpha) \ge   \frac{1}{r} g^1_A(0) = g^r_A(0).$$
If instead $1 \le \alpha \le r \le 2$, by Proposition \ref{prop:chip} and  assumption  \eqref{eq:assumptionsonmamb} we have
$$g^r_A(\alpha) = \frac{1+\alpha}{r-\alpha} \Big( \frac{2\sqrt{6}}{ \sqrt{m}  } - \frac{4\sqrt{6}}{\sqrt{4+2p}\sqrt{m}} \Big) \ge  0 . $$
Thus, the minimum of $g_A^r$ is attained at $\alpha = 0$ with $g_A^r(0) \le 0$.

 On the other hand, if ${1}/{2} \le r <1$, we first write
$${{g^r_A(\alpha) = \frac{1}{r}   \frac{1+\alpha}{1 -\frac{\alpha}{r}} v(\alpha,m_A)
            =  \frac{1}{r}   \frac{\frac{1}{2}-\alpha}{1 -\frac{\alpha}{r}}       \frac{1+\alpha}{\frac{1}{2}-\alpha}    v(\alpha,m_A)  =   \frac{1}{r}   \frac{\frac{1}{2}-\alpha}{1 -\frac{\alpha}{r}}      g_A^{ 1/2}(\alpha).} }$$
  If $\alpha \in [0,{1}/{2}]$,   we have  $0 \le \frac{1/2-\alpha}{1 -\alpha/r}  \le {1}/{2}$. This, together  with the above relation  and (ii), shows 
$${ 0 \ge g_A^{1/2}(\alpha) \ge g_A^{1/2}(0),}$$ 
and then also   
$${g^r_A(\alpha) \ge  \frac{1}{2r}  g^{1/2}_A(0) = g^r_A(0).}$$
If instead ${1}/{2} \le \alpha \le r \le 1$, by Proposition \ref{prop:chip} and  assumption  \eqref{eq:assumptionsonmamb} we   have
$${g^r_A(\alpha) = \frac{1+\alpha}{r-\alpha} \Big( \frac{2\sqrt{6}}{ \sqrt{m}  } - \frac{4\sqrt{6}}{\sqrt{4+2p}\sqrt{m}} \Big)  \ge 0, }$$
and thus the minimum of $g_A^r$ is attained at $\alpha = 0$ with $g_A^r(0) \le 0$.

Let us now come to the proof of (b)  of Lemma \ref{lem:appendix}. We first write
\begin{align*}
g^r_B(\beta) =   \frac{1+\beta}{1 -r\beta} v(\beta,m_B) =    \frac{1-\frac{\beta}{2}}{1 -r\beta}  \frac{1+\beta}{1-\frac{\beta}{2}}    v(\beta,m_B) =       \frac{1-\frac{\beta}{2}}{1 -r\beta}   g_B^{1/2}(\beta).
\end{align*}
Recall that $g_B^{1/2}(0) > 0$. Then, for $\beta \in [0,{1}/{r}]$, by $\frac{1-\beta/2}{1 -r\beta} \ge 1$ and  $g_B^{1/2}(\beta) \ge g_B^{1/2}(0)  > 0$ (see (iii)), we conclude
$$g^r_B(\beta) \ge g_B^{1/2}(0) = g_B^r(0),$$
i.e., $g_B^r$ attains its minimum at $\beta = 0$.

{\bf \flushleft Step 2.}  We now proceed with the proof of the properties (i)--(iii). To this end, we  define the auxiliary functions
$$h_1^r(x) = \frac{1+x}{r-x} \Big( \frac{\sqrt{6}}{\sqrt{1+\frac{p}{2}}} - \frac{f(x)}{2} \Big),$$
$$h_2^r(x) = \frac{1+x}{1-rx} \Big( \frac{\sqrt{6}}{\sqrt{1+\frac{p}{2}}} - \frac{f(x)}{2} \Big),$$
$$h_3^r(x) = \frac{1+x}{r-x} \Big( \frac{\sqrt{6}}{\sqrt{1+\frac{p}{2}}} - \frac{\sqrt{m}}{\sqrt{m_A}} \frac{f(x)}{2 \sqrt{1+ x} } \Big),$$
and
$$h_4^r(x) = \frac{1+x}{1-rx} \Big( \frac{\sqrt{6}}{\sqrt{1+\frac{p}{2}}} - \frac{\sqrt{m}}{\sqrt{m_B}} \frac{f(x)}{2 \sqrt{1+ x} } \Big).$$
 Recalling the definition  of the function $g_A^r$  in \eqref{g-def}, we have
\begin{equation*}
-g_A^r(\alpha) = \frac{2}{\sqrt{m}} \min(h_1^r(\alpha), h_3^r(\alpha)).
\end{equation*}
In other words, it is given by $h_3^r(\alpha)$ for $\alpha \in
[0,{m}/{m_A} - 1]$ and by $h_1^r(\alpha)$ for $\alpha \in [{m}/{m_A} -
1, r]$. In  Steps 3--5  below we show for $r =1/2$ or $r=1$ that   $h_1^r$ is decreasing and $h_3^r$ achieves its maximum on the interval $[0,{m}/{m_A} - 1]$ at $0$. This will show that $-g_A^r$ is maximized  (and thus $g_A^r$ is minimized)  at $0$ for $r =1/2$ or $r=1$. In a similar fashion, we have
\begin{equation*}
g_B^{1/2}(\beta) = \frac{2}{\sqrt{m}} \max(-h_2^{1/2}(\beta),-h_4^{1/2}(\beta)).
\end{equation*}
In other words, it is given by $-h_4^{1/2}(\beta)$ for $\beta \in [0,{m}/{m_B} - 1]$ and by $-h_2^{1/2}(\beta)$ for $\beta \in [{m}/{m_B} - 1, {1}/{r}]$. In Steps 6--8 below we show that  $h_2^{1/2}$ is decreasing,  so that  $-h_2^{1/2}$ is increasing,  and that $h_4^{1/2}$ attains its maximum at $0$. This is enough to conclude that   $g_B^{1/2}$ is minimized at $0$.
%We will first show the estimate for $g_A^r$. We will split the estimate into two parts, using the functions $h_1^r$ and $h_3^r$. Notice that
%\begin{align}
%g_A^r(0) = \Big(\frac{f(0)}{\sqrt{m_A}} - \frac{4\sqrt{6}}{\sqrt{4+2p}\sqrt{m}} \Big) = \frac{4}{\sqrt{m}} \Big(\frac{\sqrt{m}}{\sqrt{m_A}} - \frac{\sqrt{3}}{\sqrt{2+p}} \Big) \leq 0,
%\end{align}
%since by \eqref{eq:assumptionsonmamb} we have that $\frac{m}{m_A} \leq \frac{3}{2+p}$. Now, for $\alpha \geq \frac12$ we have $f(\alpha) = 2\sqrt{6}$, so
%\begin{align*}
%g_A^r(\alpha) & = \frac{1+\alpha}{1-\alpha} \Big(\frac{f(\alpha)}{ \min\lbrace \sqrt{m}, \sqrt{(1+\alpha) m_A} \rbrace   } - \frac{4\sqrt{6}}{\sqrt{4+2p}\sqrt{m}} \Big) \\
%& \geq \frac{1+\alpha}{1-\alpha} \Big(\frac{2 \sqrt{6}}{\sqrt{m}} - \frac{4\sqrt{6}}{\sqrt{4+2p}\sqrt{m}} \Big) = \frac{2 (1+\alpha) \sqrt{6}}{(1-\alpha) \sqrt{m}} \Big(1 - \frac{1}{\sqrt{1+\frac{p}{2}}} \Big) \geq 0.
%\end{align*}
%Hence, $\max_{\alpha \in [0,1]} (-g_A^r(\alpha))  = \max_{\alpha \in [0,\frac{1}{2}]} (-g_A^r(\alpha))$.

{\bf \flushleft Step 3.}   We first check that   $h_1^r$ is decreasing
on $[0,r]$. For later purposes, we derive this property not only for
$r =1/2$ and $r=1$ but also for $r=2$.  Recall the
value $r_*$ defined in \eqref{rstar}.  We will check separately
the cases $x \in [0,r_* ]$, $x \in [r_* ,1/2]$, and $x \in [1/2,r]$,    where the case $x \in [1/2,r]$ is only necessary for $r =1$ or $r=2$.  For $x \in [0, r_* ]$, we have $f(x) = 4 + 2 \sqrt{{x}/({1+x})}$, so
$$h_1^r(x) = \frac{1+x}{r-x} \Big( \frac{\sqrt{6}}{\sqrt{1+\frac{p}{2}}} - 2 -  \frac{\sqrt{x}}{\sqrt{1+x}} \Big). $$
Let $C_1 = \frac{\sqrt{6}}{\sqrt{1+{p}/{2}}} - 2 \leq
\sqrt{6} - 2$. For  $r =1/2, 1$, or $2$,   we   compute the derivative and get
\begin{align*}
(h_1^r)'(x) &   = \frac{  2C_1(1+r) \sqrt{x(x+1)} - (1+2r)x - r}{2(x-r)^2 \sqrt{x(x+1)}} .
\end{align*}
 A direct calculation yields 
$$2C_1(1+r) \sqrt{x(x+1)} - (1+2r)x - r < 0$$ 
for all $x \in [0,r_* ]$ and $r =1/2, 1$, or $2$,  so that $h_1^r$ is decreasing on the interval $[0, r_*]$.  
%

% New calculation at r= 1: -1 - 3 x + 4 (-2 + sqrt(6)) sqrt(x (1 + x))
% New calculation at r= 1/2: -1/2 - 2 x + 3 (-2 + sqrt(6)) sqrt(x (1 + x))
% New calculation at r= 2: -2 - 5 x + 6 (-2 + sqrt(6)) sqrt(x (1 + x))

%\comment{ Link to calculation: \href{https://www.wolframalpha.com/input?i=d\%2Fdx+\%28\%281\%2Bx\%29\%28C+-+sqrt\%28x\%2F\%281\%2Bx\%29\%29\%29\%29\%2F\%281-x\%29}{here} }
%
%
%\comment{ Link to calculation: \href{https://www.wolframalpha.com/input?i=d\%2Fdx+\%28\%281\%2Bx\%29\%28C+-+sqrt\%28x\%2F\%281\%2Bx\%29\%29\%29\%29\%2F\%282-x\%29}{here} }
%
%
%%\comment{ Link to calculation: \href{https://www.wolframalpha.com/input?i=d%2Fdx+%28%281%2Bx%29%28C+-+sqrt%28x%2F%281%2Bx%29%29%29%29%2F%281%2F2-x%29}{here} }
%%
%

Similarly, for $x \in [r_*,1/2]$, we have $f(x) = (4 + 2\sqrt{2x})/\sqrt{1+ x}$, so
$$h_1^r(x) = \frac{1+x}{r-x} \Big( \frac{\sqrt{6}}{\sqrt{1+\frac{p}{2}}} - \frac{2}{\sqrt{1+x}} -  \frac{\sqrt{2x}}{\sqrt{1+x}} \Big). $$
Denote by $C_2 = \frac{\sqrt{6}}{\sqrt{1+p/2}} \leq \sqrt{6}$. For $r \in \lbrace 1/2,1,2\rbrace$, we directly calculate the derivative and get
\begin{align*}
(h_1^r)'(x) &= \frac{(2+2r) C_2 \sqrt{x (x + 1)} - 2 \sqrt{x} (x + 2+r) - \sqrt{2} ((1+2r) x + r)}{2 (x - r)^2 \sqrt{x(x + 1)}} \\
&\leq \frac{(2+2r) \sqrt{6} \sqrt{x (x + 1)} - 2 \sqrt{x} (x + 2+r) - \sqrt{2} ((1+2r) x + r)}{2 (x - r)^2 \sqrt{x(x + 1)}}  \le 0
\end{align*} 
with equality if and only if $x = 1/2$. Hence,  $h_1^r$ is also decreasing on the interval $[r_*,1/2]$, and consequently we have shown that $h_1^r$ is decreasing on the whole interval $[0,1/2]$.

%
%New calculation:
%-sqrt(2) r - 2 (2 + r) sqrt(x) - sqrt(2) (1 + 2 r) x - 2 x^(3/2) + 2 sqrt(6) sqrt(x (1 + x)) + 2 sqrt(6) r sqrt(x (1 + x))
%
%r=1 :  -sqrt(2) - 6 sqrt(x) - 3 sqrt(2) x - 2 x^(3/2) + 4 sqrt(6) sqrt(x (1 + x)),  x from 0 to 0.5
%
%r=1/2: -1/sqrt(2) - 5 sqrt(x) - 2 sqrt(2) x - 2 x^(3/2) + 3 sqrt(6) sqrt(x (1 + x)),  x from 0 to 0.5
%
% r=2: -2 sqrt(2) - 8 sqrt(x) - 5 sqrt(2) x - 2 x^(3/2) + 6 sqrt(6) sqrt(x (1 + x)),  x from 0 to 0.5
%
%
%\comment{ Link to calculation: \href{https://www.wolframalpha.com/input?i=d\%2Fdx+\%28\%281\%2Bx\%29\%28C+-+2sqrt\%281\%2F\%281\%2Bx\%29\%29+-+sqrt\%282x\%2F\%281\%2Bx\%29\%29\%29\%29\%2F\%281-x\%29}{part 1}, \href{https://www.wolframalpha.com/input?i=-\%282+\%28sqrt\%28x\%29+\%28x+\%2B+3\%29+-+2+sqrt\%286\%29+sqrt\%28x+\%28x+\%2B+1\%29\%29\%29+\%2B+sqrt\%282\%29+\%283+x+\%2B+1\%29\%29\%2F\%282+\%28x+-+1\%29\%5E2+sqrt\%28x\%29+sqrt\%28x+\%2B+1\%29\%29}{part 2} }

%
%
%\comment{ Link to calculation: \href{https://www.wolframalpha.com/input?i=d\%2Fdx+\%28\%281\%2Bx\%29\%28C+-+2sqrt\%281\%2F\%281\%2Bx\%29\%29+-+sqrt\%282x\%2F\%281\%2Bx\%29\%29\%29\%29\%2F\%282-x\%29}{part 1}, \href{https://www.wolframalpha.com/input?i=-\%282+\%28sqrt\%28x\%29+\%28x+\%2B+4\%29+-+3+sqrt\%286\%29+sqrt\%28x+\%28x+\%2B+1\%29\%29\%29+\%2B+sqrt\%282\%29+\%285+x+\%2B+2\%29\%29\%2F\%282+\%28x+-+2\%29\%5E2+sqrt\%28x\%29+sqrt\%28x+\%2B+1\%29\%29\%2C+x+\%5Cin+\%5B0\%2C0.5\%5D}{part 2} }
%

Finally, for $x \in [1/2,r]$, we just need to consider the cases $r=1$ and $r=2$. We have $f(x) = 2 \sqrt{6}$, so
$$ h_1^1(x) = \frac{1+x}{r-x} \Big( \frac{\sqrt{6}}{\sqrt{1+\frac{p}{2}}} - \sqrt{6} \Big). $$
 We compute the derivative and get
\begin{align*}
(h_1^1)'(x) =  \Big( \frac{\sqrt{6}}{\sqrt{1+\frac{p}{2}}} - \sqrt{6}\Big) \frac{1+r}{(r-x)^2} \le 0.
\end{align*}
Hence, $h_1^r$ is decreasing on $[0,r]$ for $r =1/2, 1$, or $2$.  

{\bf \flushleft Step 4.} Now, we focus on $h_3^r$. By
\eqref{eq:assumptionsonmamb}, we have ${m}/{m_A} - 1 \leq 1/2$. In
this step we show that $h_3^r$ is maximized on $[0,{m}/{m_A} - 1 ]$ at
one of the three points $0$,  ${m}/{m_A} - 1$, or $r_* $ (and that
the latter is only possible if $r_* \le {m}/{m_A} - 1 $).   The values at the three points will then be compared in Step 5.  To this end, we will analyze the monotonicity of $h_3^r$. More precisely, we check that in the intervals $ [0, r_* ]$ and $[r_* ,1/2]$ the function $(h_3^r)'$ changes sign at most once and, if it does, it changes from minus to plus  (as $x$ increases). This indeed shows that the maximum is attained at $0$,  ${m}/{m_A} - 1$, or $r_* $.

Let us come to the details. For $x \in [0, r_* ]$, we have $f(x) = 4 + 2 \sqrt{{x}/({1+x})}$, so
$$ h_3^r(x) = \frac{1+x}{r-x} \Big( \frac{\sqrt{6}}{\sqrt{1+\frac{p}{2}}} - \frac{\sqrt{m}}{\sqrt{m_A}} \cdot \frac{2}{\sqrt{1+ x}} - \frac{\sqrt{m}}{\sqrt{m_A}} \cdot \frac{\sqrt{x}}{1+ x} \Big).$$
For  $r =1/2$ or $r=1$, we directly compute the first derivative and get
\begin{align*}
(h_3^r)'(x) &= \frac{\sqrt{6}}{\sqrt{1+\frac{p}{2}}} \Big( \frac{1+x}{r-x} \Big)' - \frac{\sqrt{m}}{\sqrt{m_A}} \Big( \frac{1+x}{r-x} \cdot \frac{2}{\sqrt{1+ x} } \Big)' - \frac{\sqrt{m}}{\sqrt{m_A}} \Big( \frac{1+x}{r-x} \cdot \frac{\sqrt{x}}{1 + x} \Big)' \\
&= \frac{\sqrt{6}}{\sqrt{1+\frac{p}{2}}} \cdot \frac{1+r}{(r-x)^2} - \frac{\sqrt{m}}{\sqrt{m_A}} \cdot \frac{r+x+2}{(r-x)^2 (1+x)^{1/2}} - \frac{\sqrt{m}}{\sqrt{m_A}} \cdot \frac{r+x}{2\sqrt{x} (r-x)^2  }\\
&= (r-x)^{-2} \Big(  \frac{\sqrt{6}}{\sqrt{1+\frac{p}{2}}} (1+r) - \frac{\sqrt{m}}{\sqrt{m_A}} \cdot \frac{r+x+2}{  (1+x)^{1/2}} - \frac{\sqrt{m}}{\sqrt{m_A}} \cdot \frac{r+x}{2\sqrt{x}    } \Big).
\end{align*}
Note that
$${x \mapsto -\frac{r+x+2}{  (1+x)^{1/2}} -
\frac{r+x}{2\sqrt{x}    }}$$
is increasing on $[0, r_* ]$,
and thus $(h_3^r)'$ can change sign at most once (from negative
 to positive 
 as $x$ increases).  For $x \in [r_* , 1/2]$, we have $f(x) = ({4 + 2\sqrt{2x}})/{\sqrt{1+ x}}$, and thus
$$ h_3^r(x) = \frac{1+x}{r-x} \Big( \frac{\sqrt{6}}{\sqrt{1+\frac{p}{2}}} - \frac{\sqrt{m}}{\sqrt{m_A}} \cdot \frac{2}{1+ x} - \frac{\sqrt{m}}{\sqrt{m_A}} \cdot \frac{\sqrt{2x}}{1+ x} \Big).$$
For $r =1/2$ or $r=1$, we calculate  the first derivative and obtain
\begin{align*}
(h_3^r)'(x) &= \frac{\sqrt{6}}{\sqrt{1+\frac{p}{2}}} \Big( \frac{1+x}{r-x} \Big)' - \frac{\sqrt{m}}{\sqrt{m_A}} \Big( \frac{1+x}{r-x} \cdot \frac{2}{1+ x} \Big)' - \frac{\sqrt{m}}{\sqrt{m_A}} \Big( \frac{1+x}{r-x} \cdot \frac{\sqrt{2x}}{1 + x} \Big)' \\
&= \frac{\sqrt{6}}{\sqrt{1+\frac{p}{2}}} \cdot \frac{1+r}{(r-x)^2} - \frac{\sqrt{m}}{\sqrt{m_A}} \cdot \frac{2}{(r-x)^2} - \frac{\sqrt{m}}{\sqrt{m_A}} \cdot \frac{r+x}{\sqrt{2} (r-x)^2 x^{1/2}}\\
&= (r-x)^{-2} \Big( \frac{\sqrt{6}}{\sqrt{1+\frac{p}{2}}} (1+r) - 2\frac{\sqrt{m}}{\sqrt{m_A}}  -  \frac{\sqrt{m}}{\sqrt{m_A}}  \frac{r+x}{ \sqrt{2x} }\Big).
\end{align*}
We note that $x \mapsto - 2   -   ({r+x})/{ \sqrt{2x}}$ is increasing on $[r_* ,1/2]$, i.e.,\ $(h_3^r)'$ can change sign at most once.

{\bf \flushleft Step 5.} As seen in Step 4, $h_3^r$ attains its maximum on the interval $[0,{m}/{m_A} - 1]$ at one of the points $0$, $r_* $, and ${m}/{m_A} - 1$, where $r_* $ is only possible if $r_* \le {m}/{m_A} - 1$. Using $f(r_* ) = \frac{20}{41}(7+2\sqrt{2})$, we compute explicitly the three values and get
\begin{equation*}
h_3^r(0) = \frac{\sqrt{6}}{r\sqrt{1+\frac{p}{2}}} - 2 \frac{\sqrt{m}}{r\sqrt{m_A}},
\end{equation*}
\begin{equation*}
h_3^r(r_* ) =  \frac{1 + r_* }{r-r_* } \Big(\frac{\sqrt{6}}{\sqrt{1+\frac{p}{2}}} -  \frac{\sqrt{m}}{\sqrt{m_A}} \frac{10(7+2\sqrt{2})}{41\sqrt{1+r_* }} \Big),
\end{equation*}
and
\begin{align*}
h_3^r \bigg( \frac{m}{m_A} - 1 \bigg) & =
                                        \frac{\frac{m}{m_A}}{1+r-\frac{m}{m_A}}
                                        \Big(
                                        \frac{\sqrt{6}}{\sqrt{1+\frac{p}{2}}}
                                        - \frac{\sqrt{m}}{\sqrt{m_A}}
                                        \frac{f(\frac{m}{m_A} - 1)}{2
                                        \sqrt{\frac{m}{m_A}} } \Big)
  \\
  &= \frac{\frac{m}{m_A}}{1+r-\frac{m}{m_A}} \Big( \frac{\sqrt{6}}{\sqrt{1+\frac{p}{2}}} - \frac{f(\frac{m}{m_A} - 1)}{2} \Big).
\end{align*}
To see that $h_3^r(0) \ge h_3^r(r_* )$ in the case $r_*
\le {m}/{m_A}-1$, it suffices to observe that, for $r =1/2$ or
$r=1$,  
\begin{align*}
\Big( \frac{10(7+2\sqrt{2})}{41} \frac{\sqrt{1+r_* }}{r-r_* } - \frac{2}{r} \Big) \frac{\sqrt{m}}{\sqrt{m_A}} & \ge \Big( \frac{10(7+2\sqrt{2})}{41}  \frac{\sqrt{1+r_* }}{r-r_* } - \frac{2}{r} \Big)\sqrt{r_* +1} \\
&\ge  \sqrt{6} \Big(\frac{1 + r_* }{r-r_* }  - \frac{1}{r} \Big)
 \ge  \Big(\frac{1 + r_* }{r-r_* }  - \frac{1}{r} \Big) \frac{\sqrt{6}}{\sqrt{1+\frac{p}{2}}},  
\end{align*}
% \comment{ Link to calculation: \href{https://www.wolframalpha.com/input?i=\%2810*\%287+\%2B+2sqrt\%282\%29\%29\%29\%2F\%2841*sqrt\%281\%2B\%28\%28688-480*sqrt\%282\%29\%29\%2F49\%29\%29\%29*\%281\%2B\%28\%28688-480*sqrt\%282\%29\%29\%2F49\%29\%29\%2F\%281-\%28\%28688-480*sqrt\%282\%29\%29\%2F49\%29\%29+++-+2+-+sqrt\%286\%29*\%28\%281\%2B\%28\%28688-480*sqrt\%282\%29\%29\%2F49\%29\%29\%2F\%281-\%28\%28688-480*sqrt\%282\%29\%29\%2F49\%29\%29-1\%29}{link}. }
% \comment{ Link to calculation: \href{https://www.wolframalpha.com/input?i=\%2810*\%287+\%2B+2sqrt\%282\%29\%29\%29\%2F\%2841*sqrt\%281\%2B\%28\%28688-480*sqrt\%282\%29\%29\%2F49\%29\%29\%29*\%281\%2B\%28\%28688-480*sqrt\%282\%29\%29\%2F49\%29\%29\%2F\%282-\%28\%28688-480*sqrt\%282\%29\%29\%2F49\%29\%29+++-+1+-+sqrt\%286\%29*\%28\%281\%2B\%28\%28688-480*sqrt\%282\%29\%29\%2F49\%29\%29\%2F\%282-\%28\%28688-480*sqrt\%282\%29\%29\%2F49\%29\%29-1\%2F2\%29}{link}. }
% 
%New calculation
%(-4 + (10 (7+2sqrt(2))sqrt(1+1/49(688-480 sqrt(2))))/(41(1/2+1/49 (-688 + 480 sqrt(2)))))sqrt(1+1/49(688 - 480 sqrt(2))) - sqrt(6) (-2 + (1 + 1/49 (688 - 480 sqrt(2)))/(1/2 + 1/49 (-688 + 480 sqrt(2))))
where in the first inequality we used that ${m}/{m_A} \ge r_* + 1 $, and the second inequality can be checked by an elementary computation. In a second step, we now check that the number
\begin{equation*}
h_3^r(0) - h_3^r \bigg( \frac{m}{m_A} - 1 \bigg) = \frac{\sqrt{6}}{\sqrt{1+\frac{p}{2}}} \bigg( \frac{1}{r} - \frac{\frac{m}{m_A}}{1+r-\frac{m}{m_A}} \bigg) - 2 \frac{\sqrt{m}}{r\sqrt{m_A}} + \frac{\frac{m}{m_A}}{1+r-\frac{m}{m_A}} \frac{f(\frac{m}{m_A} - 1)}{2}
\end{equation*}
is nonnegative. Recall that  by assumption \eqref{eq:assumptionsonmamb} we have that ${m}/{m_A}  \in [1,3/2]$. We distinguish two cases depending on whether ${m}/{m_A} - 1 $ lies in $[0,r_* ]$ or $[r_* , 1/2]$. First, for ${m}/{m_A} - 1 \in [0,r_* ]$, using the explicit formula for the minimal two-dimensional energy given in Proposition \ref{prop:chip},  we have
\begin{align*}
h_3^r(0) - h_3^r \bigg( \frac{m}{m_A} - 1 \bigg) &= \frac{\sqrt{6}}{\sqrt{1+\frac{p}{2}}} \bigg( \frac{1}{r}- \frac{\frac{m}{m_A}}{1+r-\frac{m}{m_A}} \bigg) - 2 \frac{\sqrt{m}}{r\sqrt{m_A}} + \frac{\frac{m}{m_A}}{1+r-\frac{m}{m_A}} \bigg(2 + \frac{\sqrt{\frac{m}{m_A} - 1}}{\sqrt{\frac{m}{m_A}}} \bigg).
\end{align*}
We can look at the above expression as a function of a single
parameter ${m}/{m_A}$, i.e.,  define
\begin{equation*}
h_3^{*}(x) = \frac{\sqrt{6}}{\sqrt{1+\frac{p}{2}}} \bigg( \frac{1}{r} - \frac{x}{1+r-x} \bigg) - \frac{2\sqrt{x}}{r} + \frac{x}{1+r-x} \bigg(2 + \frac{\sqrt{x - 1}}{\sqrt{x}} \bigg),
\end{equation*}
so that $h_3^{*}(m/m_A) = h_3^r(0) - h_3^r  ( m/m_A - 1 )$. Since $\frac{1}{r} \le \frac{x}{1+r-x}$ for $x \ge 1$, we get
\begin{align*}
h_3^{*}(x) \geq   {\sqrt{6}}  \bigg( \frac{1}{r} - \frac{x}{1+r-x} \bigg) - \frac{2\sqrt{x}}{r} + \frac{x}{1+r-x} \bigg(2 + \frac{\sqrt{x - 1}}{\sqrt{x}} \bigg).
\end{align*}
This function is positive on $(1,3/2]$ and equal to zero at
$1$ in both cases $r =1/2$ and $r=1$. Hence, 
$$h_3^r(0) \geq h_3^r  \bigg( \frac{m}{m_A} - 1 \bigg).$$ 
%\comment{ Link to calculation: \href{https://www.wolframalpha.com/input?i=-2*sqrt\%28x\%29+\%2B+sqrt\%286\%29*\%281+-+x\%2F\%282-x\%29\%29+\%2B+x\%2F\%282-x\%29*\%28+2+\%2B+sqrt\%28\%28x-1\%29\%2Fx\%29++++\%29\%2C+x+from+1+to+3\%2F2}{part 1} }
%
%New calculation:
%https://www.wolframalpha.com/input?i=-4*sqrt%28x%29+%2B+sqrt%286%29*%282+-+x%2F%283%2F2-x%29%29+%2B+x%2F%283%2F2-x%29*%28+2+%2B+sqrt%28%28x-1%29%2Fx%29++++%29%2C+x+from+1+to+3%2F2
In the second case, i.e., for $m/m_A - 1 \in [r_* ,1/2]$, again using the explicit formula given in Proposition~\ref{prop:chip} we have that $h_3^r(0) - h_3^r  ( m/m_A - 1  ) $ can be written as
\begin{align*}
 \frac{\sqrt{6}}{\sqrt{1+\frac{p}{2}}} \bigg( \frac{1}{r} - \frac{\frac{m}{m_A}}{1+r-\frac{m}{m_A}} \bigg) - 2 \frac{\sqrt{m}}{r\sqrt{m_A}} + \frac{\frac{m}{m_A}}{1+r-\frac{m}{m_A}} \bigg(\frac{2}{\sqrt{\frac{m}{m_A}}} + \frac{\sqrt{2(\frac{m}{m_A} - 1)}}{\sqrt{\frac{m}{m_A}}} \bigg).
\end{align*}
Again, we can look at the above expression as a function of a single parameter $m/m_A$, i.e., we define
\begin{equation*}
h_3^{**}(x) = \frac{\sqrt{6}}{\sqrt{1+\frac{p}{2}}} \bigg( \frac{1}{r}- \frac{x}{1+r-x} \bigg) - \frac{2\sqrt{x}}{r} + \frac{x}{1+r-x} \bigg(\frac{2}{\sqrt{x}} + \frac{\sqrt{2(x - 1)}}{\sqrt{x}} \bigg),
\end{equation*}
so that $h_3^{**}(m/m_A) = h_3^r(0) - h_3^r ( m/m_A - 1 )$. Since $\frac{1}{r} \le \frac{x}{1+r-x}$ for $x \ge 1$, we find 
\begin{align*}
h_3^{**}(x) \geq \sqrt{6} \bigg( \frac{1}{r}- \frac{x}{1+r-x} \bigg) - \frac{2\sqrt{x}}{r}+ \frac{x}{1+r-x} \bigg(\frac{2}{\sqrt{x}} + \frac{\sqrt{2(x - 1)}}{\sqrt{x}} \bigg).
\end{align*}
%\comment{ Link to calculation: \href{https://www.wolframalpha.com/input?i=-2*sqrt\%28x\%29+\%2B+sqrt\%286\%29*\%281+-+x\%2F\%282-x\%29\%29+\%2B+sqrt\%28x\%29\%2F\%282-x\%29*\%28++2+\%2B+sqrt\%282\%28x-1\%29\%29++++\%29\%2C+x+from+1+to+3\%2F2}{link} }
%
%\comment{ Link to calculation: \href{https://www.wolframalpha.com/input?i=-sqrt\%28x\%29+\%2B+sqrt\%286\%29*\%281\%2F2+-+x\%2F\%283-x\%29\%29+\%2B+sqrt\%28x\%29\%2F\%283-x\%29*\%28++2+\%2B+sqrt\%282\%28x-1\%29\%29++++\%29\%2C+x+from+1+to+3\%2F2}{link} }
%
%New calculation:
%https://www.wolframalpha.com/input?i=-4*sqrt%28x%29+%2B+sqrt%286%29*%282+-+x%2F%283%2F2-x%29%29+%2B+sqrt%28x%29%2F%283%2F2-x%29*%28++2+%2B+sqrt%282%28x-1%29%29++++%29%2C+x+from+1+to+3%2F2
For $r =1/2$ or $r=1$, this function is positive on $(1,3/2)$ and equal to zero at $1$ and $3/2$. Hence, we   have $h_3^r(0) \geq h_3^r ( m/m_A - 1)$, so $h_3^r$ attains its maximum on the interval $[0,m/m_A - 1]$ at $0$.

{\bf \flushleft Step 6.} To see that $h_2^{1/2}$ is decreasing, it suffices to note that 
$$h_2^{1/2}(x) = \frac{1+x}{1-\frac{x}{2}} \Big( \frac{\sqrt{6}}{\sqrt{1+\frac{p}{2}}} - \frac{f(x)}{2} \Big) = 2\frac{1+x}{2-x} \Big( \frac{\sqrt{6}}{\sqrt{1+\frac{p}{2}}} - \frac{f(x)}{2} \Big) = 2h_1^{2}(x),$$
and to use that $h_1^2$ is decreasing, see Step 3.

{\bf \flushleft Step 7.} In this step we show that  $h_4^{1/2}$
attains its maximum on $[0, {m}/{m_B} - 1 ]$  at  one of the points
$0$, $r_* $, $1/2$, or $2$.  The values at the three
points will then be compared in Step 8.  Similarly to the argument
used in Step 4, here  the argument relies on the fact that in the
intervals $ [0,r_* ]$, $[r_* ,1/2]$, and $[1/2,2]$ the
function $(h_4^{1/2})'$ changes sign at most once (from negative 
to positive  as $x$ increases). This  indeed shows that the maximum is attained at $0$,  $r_* $, $1/2$, or $2$. As in Step 4, we check separately the cases $x \in [0,r_* ]$, $x \in [r_* ,1/2]$, and $x \in [1/2,2]$. 

For $x \in [0,r_* ]$, we have $f(x) = 4 + 2 \sqrt{ {x}/({1+x})}$, so
$$ h_4^{1/2}(x) = \frac{1+x}{1-\frac{x}{2}} \Big( \frac{\sqrt{6}}{\sqrt{1+\frac{p}{2}}} - \frac{\sqrt{m}}{\sqrt{m_B}} \cdot \frac{2}{\sqrt{1+ x}} - \frac{\sqrt{m}}{\sqrt{m_B}} \cdot \frac{\sqrt{x}}{1+ x} \Big).$$
We directly compute the derivative and get
\begin{align*}
(h_4^{1/2})'(x) &= \frac{\sqrt{6}}{\sqrt{1+\frac{p}{2}}} \Big( \frac{1+x}{1-\frac{x}{2}} \Big)' - \frac{\sqrt{m}}{\sqrt{m_B}} \Big( \frac{1+x}{1-\frac{x}{2}}  \frac{2}{\sqrt{1+ x} } \Big)' - \frac{\sqrt{m}}{\sqrt{m_B}} \Big( \frac{1+x}{1-\frac{x}{2}}  \frac{\sqrt{x}}{1 + x} \Big)' \\
&= (2-x)^{-2} \Big( \frac{6\sqrt{6}}{\sqrt{1+\frac{p}{2}}}  - \frac{\sqrt{m}}{\sqrt{m_B}}  \frac{2x+8}{ \sqrt{1+x}} - \frac{\sqrt{m}}{\sqrt{m_B}}  \frac{x+2}{  \sqrt{x}}\Big).
\end{align*}
We note that $${x \mapsto -  \frac{2x+8}{ \sqrt{1+x}} -  \frac{x+2}{  \sqrt{x}}}$$
 is increasing on $[0,r_* ]$ and thus $(h_4^{1/2})'$ can change sign at most once. For $x \in [r_* ,1/2]$, we have $f(x) = ({4 + 2\sqrt{2x}})/{\sqrt{1+ x}}$, and thus
$$ h_4^{1/2}(x) = \frac{1+x}{1-\frac{x}{2}} \Big( \frac{\sqrt{6}}{\sqrt{1+\frac{p}{2}}} - \frac{\sqrt{m}}{\sqrt{m_B}}  \frac{2}{1+ x} - \frac{\sqrt{m}}{\sqrt{m_B}}  \frac{\sqrt{2x}}{1+ x} \Big).$$
We directly compute the derivative and get
\begin{align*}
(h_4^{1/2})'(x) &= \frac{\sqrt{6}}{\sqrt{1+\frac{p}{2}}} \Big( \frac{1+x}{1-\frac{x}{2}} \Big)' - \frac{\sqrt{m}}{\sqrt{m_B}} \Big( \frac{1+x}{1-\frac{x}{2}}  \frac{2}{1+ x} \Big)' - \frac{\sqrt{m}}{\sqrt{m_B}} \Big( \frac{1+x}{1-\frac{x}{2}}  \frac{\sqrt{2x}}{1 + x} \Big)' \\
&= (2-x)^{-2} \Big( \frac{6\sqrt{6}}{\sqrt{1+\frac{p}{2}}}  - 4\frac{\sqrt{m}}{\sqrt{m_B}}  - \frac{\sqrt{m}}{\sqrt{m_B}}  \frac{\sqrt{2}(x+2)}{  \sqrt{x}} \Big).
\end{align*}
We observe that $$ x \mapsto - 4   -     \frac{\sqrt{2}(x+2)}{  \sqrt{x}}$$ is increasing on $[r_* , 1/2]$, i.e.,\ $(h_4^{1/2})'$ can change sign at most once.  

Eventually, for $x \in [1/2,2]$, we have
$$ h_4^{1/2}(x) = \frac{1+x}{1-\frac{x}{2}} \Big( \frac{\sqrt{6}}{\sqrt{1+\frac{p}{2}}} - \frac{\sqrt{m}}{\sqrt{m_B}}  \frac{\sqrt{6}}{\sqrt{1+ x}} \Big)$$
and the derivative reads as
$$ (h_4^{1/2})'(x) =    (2-x)^{-2} \Big( \frac{6\sqrt{6}}{\sqrt{1+\frac{p}{2}}}  -  \frac{\sqrt{m}}{\sqrt{m_B}}  \frac{x+4}{  \sqrt{1+x}} \Big). $$
We observe that $x \mapsto -({x+4})/{  \sqrt{1+x}}$ is increasing  on $[1/2,2]$.

{\bf \flushleft Step 8.} In this step, we compare the 
values of $h_4^{1/2}$  at  $0$, $r_*$, $1/2$, and $2$ in order to conclude that
the maximum in $[0,m/m_B- 1 ]$ is indeed at $0$.

We recall that $f(r_* ) = \frac{20}{41}(7+2\sqrt{2})$ and we compute explicitly  
\begin{equation*}
h_4^{1/2}(0) = \frac{\sqrt{6}}{\sqrt{1+\frac{p}{2}}} - 2 \frac{\sqrt{m}}{\sqrt{m_B}}, 
\end{equation*}
\begin{equation*} 
 h_4^{1/2}(r_* ) =  \frac{1 + r_* }{1-r_* /2} \Big(\frac{\sqrt{6}}{\sqrt{1+\frac{p}{2}}} -  \frac{\sqrt{m}}{\sqrt{m_B}} \frac{10(7+2\sqrt{2})}{41\sqrt{1+r_* }} \Big),
\end{equation*}
and
\begin{equation*}
h_4^{1/2} \bigg(\frac{1}{2} \bigg) =  2 \Big(\frac{\sqrt{6}}{\sqrt{1+\frac{p}{2}}} -  2\frac{\sqrt{m}}{\sqrt{m_B}} \Big).
\end{equation*}
As $0 \le p\le 1/3$  and $m_B/m \le \frac{1+2p}{3}$ (see \eqref{eq:assumptionsonmamb}),   we note that 
\begin{align*}
h_4^{1/2}(0) \le \frac{\sqrt{6}}{\sqrt{1+\frac{p}{2}}} - 2 \frac{\sqrt{3}}{\sqrt{1+2p}} < 0. 
\end{align*} 
This directly shows  $h_4^{1/2}(0) \ge h_4^{1/2}(\frac{1}{2})$. To see that $h_4^{1/2}(0) \ge h_4^{1/2}(r_* )$,  use again the assumption $p \le 1/3$ and \eqref{eq:assumptionsonmamb}  to see  $m/m_B\ge \frac{3}{1+2p} \ge \frac{9}{5}.$
Therefore,  it is elementary to check 
\begin{align*}
\Big( \frac{10(7+2\sqrt{2})}{41} \frac{\sqrt{1+r_* }}{1-r_* /2} - 2 \Big) \frac{\sqrt{m}}{\sqrt{m_B}} & \ge \Big(\frac{10(7+2\sqrt{2})}{41}  \frac{\sqrt{1+r_* }}{1-r_* /2} -2\Big) \sqrt{\frac{9}{5}}\ge  \sqrt{6} \Big(\frac{1 + r_* }{1-r_* /2}  - 1 \Big) \\
&  \ge  \Big(\frac{1 + r_* }{1-r_* /2}  -1 \Big) \frac{\sqrt{6}}{\sqrt{1+\frac{p}{2}}}.  
\end{align*}
%new calulation
%((10 (7 + 2 sqrt(2)) sqrt(1 + 1/49 (688 - 480 sqrt(2))))/(41 (1 + 1/98 (-688 + 480 sqrt(2))))-2)sqrt(9/5)
%-
%sqrt(6) (-1 + (1 + 1/49 (688 - 480 sqrt(2)))/(1 + 1/98 (-688 + 480 sqrt(2)))) 
%>0
Eventually, we show that $h_4^{1/2}(0) \ge \limsup_{x \to 2}h_4^{1/2}(x)$,  which is a bit more delicate  since $h_4^{1/2}$ is not defined at $x=2$. Recalling that for   $x \in [1/2,2]$ we have $f(x) = 2\sqrt{6}$, it holds 
\begin{align}\label{lasst} h_4^{1/2}(x) = \frac{1+x}{1-\frac{x}{2}} \Big( \frac{\sqrt{6}}{\sqrt{1+\frac{p}{2}}} - \frac{\sqrt{m}}{\sqrt{m_B}}  \frac{\sqrt{6}}{\sqrt{1+ x}} \Big).
\end{align}
Since $x \le m/m_B- 1$ and thus $m/m_B \ge 1+x$, we get $h_4^{1/2}(x) \le 0$ with strict inequality for $p>0$. In particular, for $p>0$, we get $\lim_{x\to 2} h_4^{1/2}(x) = -\infty$. Now, suppose that $p=0$, and recall by   \eqref{eq:assumptionsonmamb} that $m_B /m \le 1/3$. If $m_B /m  < 1/3$, the term on the right-hand side of \eqref{lasst} is again negative for $x$ close to 2 leading to $\lim_{x\to 2} h_4^{1/2}(x) = -\infty$. If $p=0$ and $m_B /m  = 1/3$ we calculate 
$$\lim_{x \to 2} h_4^{1/2}(x) =  \lim_{x \to 2}     \frac{1+x}{1-\frac{x}{2}} \Big(  \sqrt{6}  -  \frac{\sqrt{ 3} \sqrt{6}}{\sqrt{1+ x}} \Big) =-\sqrt{6}.$$
In this case, we also have $h_4^{1/2}(0) = \sqrt{6}- 2\sqrt{3}$. This shows  $h_4^{1/2}(0) \ge \limsup_{x \to 2}h_4^{1/2}(x)$ and concludes the proof.

\section*{Acknowledgements}
MF acknowledges support of the DFG project FR 4083/3-1. This work was
supported by the Deutsche Forschungsgemeinschaft (DFG, German Research
Foundation) under Germany's Excellence Strategy EXC 2044-390685587,
Mathematics M\"unster: Dynamics--Geometry--Structure.  WG acknowledges
support of the FWF grants 10.55776/ESP88 and
10.55776/I4354. U.S. acknowledges support of the FWF grants
10.55776/F65, 10.55776/I4354, 10.55776/I5149, and 10.55776/P32788.

\end{document}